\theoremstyle{plain}
\newtheorem{theorem}{Theorem}[section]
\newtheorem{lemma}[theorem]{Lemma}
\theoremstyle{definition}
\newtheorem{definition}[theorem]{Definition}
\theoremstyle{remark}
\newtheorem{remark}[theorem]{Remark}
\numberwithin{equation}{section} 
\numberwithin{figure}{section}   
\newcommand{\vect}[1]{{#1}}
\newcommand{\ba}{\vect{a}}
\newcommand{\bb}{\vect{b}}
\newcommand{\bD}{\vect{D}}
\newcommand{\bu}{\vect{u}}
\newcommand{\bU}{\vect{U}}
\newcommand{\bv}{\vect{v}}
\newcommand{\bV}{\vect{V}}
\newcommand{\bw}{\vect{w}}
\newcommand{\bx}{\vect{x}}
\newcommand{\bg}{\vect{g}}
\newcommand{\field}[1]{\mathbb{#1}}
\newcommand{\nR}{\field{R}}
\DeclareMathAlphabet{\mathpzc}{OT1}{pzc}{m}{it}
\newcommand{\RE}{\text{Re}}
\newcommand{\cnj}[1]{\overline{#1}}
\newcommand{\ip}[2]{\left<#1,#2\right>}
\title[Sensitivity: Navier-Stokes, Data Assimilation]{Sensitivity Analysis for the 2D Navier-Stokes Equations with Applications to Continuous Data Assimilation}
\author{Adam Larios}
\address[Adam Larios]{Department of Mathematics, 
                University of Nebraska--Lincoln,
        Lincoln, NE 68588-0130, USA}
\email[Adam Larios]{alarios@unl.edu}
\author{Elizabeth Carlson}
\address[Elizabeth Carlson]{Department of Mathematics, 
                University of Nebraska--Lincoln,
        Lincoln, NE 68588-0130, USA}
\email[Elizabeth Carlson]{elizabeth.carlson@huskers.unl.edu}
\date{}
\keywords{Sensitivity Analysis, Navier-Stokes Equations, Continuous Data Assimilation, Reynolds Number}
\thanks{MSC 2010 Classification: 
34D06, 
35A01, 
35Q30, 
35Q35, 
37C50, 
76D03 
}
\begin{document}

\begin{abstract}
We rigorously prove the well-posedness of the formal sensitivity equations with respect to the Reynolds number corresponding to the 2D incompressible Navier-Stokes equations.  Moreover, we do so by showing a sequence of difference quotients converges to the unique solution of the sensitivity equations for both the 2D Navier-Stokes equations and the related data assimilation equations, which utilize the continuous data assimilation algorithm proposed by Azouani, Olson, and Titi.  As a result, this method of proof provides uniform bounds on difference quotients, demonstrating parameter recovery algorithms that change parameters as the system evolves will not blow-up.  We also note that this appears to be the first such rigorous proof of global existence and uniqueness to strong or weak solutions to the sensitivity equations for the 2D Navier-Stokes equations (in the natural case of zero initial data), and that they can be obtained as a limit of difference quotients with respect to the Reynolds number.
\end{abstract}

\maketitle
\thispagestyle{empty}

\noindent
\section{Introduction}\label{secInt}
\noindent


Turbulent flows are well-known to be chaotic, in the sense that they solutions are highly sensitive to initial conditions (see, e.g., \cite{Constantin_Foias_1985_Lyapunov,Grappin_Leorat_1991JFM}).  However, sensitivity with respect to physical parameters is also an important consideration in terms of making  reliably accurate predictions.  Parameter sensitivity is often measured by formally considering the derivative of a solution with respect to a particular parameter; however, the only rigorous justification of this approach in the literature seems to be limited to linear equations, or non-linear equations under assumptions on the nonlinearity which are too strong to include, e.g., the Navier-Stokes equations of fluids (see, e.g., \cite{Brewer_1982_JMathAnal, Gibson_Clark_1977} for a semigroup theory approach).  Therefore, in the present work, we provide a fully rigorous proof of the global well-posedness of the sensitivity equations for the 2D Navier-Stokes equations.  Specifically, we give a rigorous proof of the existence of unique weak and strong solutions with zero\footnote{Note that considerations of sensitivity arise in the context of perturbations; hence, the natural initial data for a sensitivity equation is identically zero data.} initial data to the associated Reynolds number sensitivity equations specifically for the 2D Navier-Stokes equations.  Moreover, we prove that the derivative of solutions with respect to the velocity is a limit of difference quotients corresponding to different Reynolds numbers. 

We also extend our results to the case of a data assimilation algorithm.  This is because the motivation for this present work arose from our recent work \cite{Carlson_Hudson_Larios_2018}, where an algorithm was proposed to recover an unknown viscosity, or equivalently Reynolds number.  This algorithm works in tandem with a data assimilation method proposed in \cite{Azouani_Titi_2014,Azouani_Olson_Titi_2014}.  This algorithm, commonly referred to as the Azouani-Olson-Titi (AOT) or Continuous Data Assimilation (CDA)  algorithm, has seen much recent work (see, e.g., 
\cite{Albanez_Nussenzveig_Lopes_Titi_2016,
Altaf_Titi_Knio_Zhao_Mc_Cabe_Hoteit_2015,
Bessaih_Olson_Titi_2015,
Biswas_Foias_Mondaini_Titi_2018downscaling,
Biswas_Hudson_Larios_Pei_2017,
Biswas_Martinez_2017,
Carlson_Hudson_Larios_2018,
Carlson_Larios_2020_nonlinConv,
Celik_Olson_Titi_2019,
DiLeoni_Clark_Mazzino_Biferale_2018_unraveling,
DiLeoni_Clark_Mazzino_Biferale_2019,
Desamsetti_Dasari_Langodan_Knio_Hoteit_Titi_2019_WRF,
Farhat_GlattHoltz_Martinez_McQuarrie_Whitehead_2019,
Farhat_Johnston_Jolly_Titi_2018,
Farhat_Jolly_Titi_2015,
Farhat_Lunasin_Titi_2016abridged,
Farhat_Lunasin_Titi_2016benard,
Farhat_Lunasin_Titi_2016_Charney,
Farhat_Lunasin_Titi_2017_Horizontal,
Farhat_Lunasin_Titi_2018_Leray_AOT,
Foias_Mondaini_Titi_2016,
Foyash_Dzholli_Kravchenko_Titi_2014,
GarciaArchilla_Novo_Titi_2018,
Gardner_Larios_Rebholz_Vargun_Zerfas_2020_VVDA,
Gesho_Olson_Titi_2015,
GlattHoltz_Kukavica_Vicol_2014,
Hudson_Jolly_2019,
Ibdah_Mondaini_Titi_2018uniform,
Jolly_Martinez_Olson_Titi_2018_blurred_SQG,
Jolly_Martinez_Titi_2017,
Larios_Pei_2017_KSE_DA_NL,
Larios_Rebholz_Zerfas_2018,
Larios_Victor_2019,
Lunasin_Titi_2015,
Markowich_Titi_Trabelsi_2016_Darcy,
Mondaini_Titi_2018_SIAM_NA,
Pei_2019,
Rebholz_Zerfas_2018_alg_nudge,
Zerfas_Rebholz_Schneier_Iliescu_2019} 
and the references therein.)
Specifically, \cite{Azouani_Olson_Titi_2014} considers the 2D Navier-Stokes system, written abstractly in the form
\begin{align*}
\frac{d\bu}{dt} = F_\RE(\bu).
\end{align*}
The difficulty is that the initial data is unknown; however, it is assumed that the solution can be measured at certain points.  In order to converge to the correct solution, it is proposed to instead consider the system
\begin{empheq}[left=\empheqlbrace]{align*}
 \frac{d\bv}{dt} &= F_\RE(\bv) +\mu(I_h(\bu)-I_h(\bv)) \\
\bv(0) &= \bv_0,
\end{empheq}
where $\mu>0$ is a sufficiently large positive relaxation parameter, $I_h(\bu)$ represents the observational measurements with sufficiently small spacing $h>0$, $\bv_0$ is arbitrarily chosen in a specific Hilbert space, and $F_\RE$ is a nonlinear, nonlocal differential operator depending on the Reynolds number parameter $\RE>0$.  The function $I_h$ is a linear interpolant satisfying certain bounds (see Section \ref{secPre}).  In \cite{Azouani_Olson_Titi_2014}, it was proven that $\bv$ converges to $\bu$ exponentially fast in certain standard norms.  Later, \cite{Carlson_Hudson_Larios_2018} investigated the case of an unknown Reynolds number, gave estimates for the resulting error in the solution, proposed an algorithm to recover the unknown Reynolds number, and demonstrated computationally that the algorithm converges exponentially fast in time to the correct solution.  
However, the algorithm in \cite{Carlson_Hudson_Larios_2018} introduces a discontinuous change in the Reynolds number during the simulation, leading to a desire to ensure that this abrupt change did not lead to the development of, e.g., large shocks in the solution.  Hence, we also prove that the difference quotient methods developed here can be used to prove rigorous results for the sensitivity equations of the modified system of equations via the data assimilation algorithm.
For this system, we prove that the derivative of solutions with respect to the Reynolds number is a well-defined object which is bounded in appropriate function spaces; additionally we prove that the corresponding sensitivity equations are globally well-posed in time in an appropriate sense and that strong solutions are unique.

Sensitivity for partial differential equations has been studied formally in many contexts; see, e.g., 
\cite{
Anderson_Newman_Whitfield_Nielsen_1999_AIAA,
Borggaard_Burns_1997,
Breckling_Neda_Pahlevani_2018_CMA,
Brewer_1982_JMathAnal,
Davis_Pahlevani_2013,
Fernandez_Moubachir_2002_MMMAS,
Gibson_Clark_1977,
Hamby_1994_EnvMonAssess,
Hyoungjin_Chongam_Rho_DongLee_1999_KSIAM,
Kouhi_Houzeaux_Cucchietti_Vazquez_2016_AIAAConf,
Neda_Pahlevani_Rebholz_Waters_2016,
Noacco_Sarrazin_Pianosi_Wagener_2019,
Pahlevani_2004,
Pahlevani_2006,
Rebholz_Zerfas_Zhao_2017_JMFM,
Stanley_Stewart_2002,
Vemuri_Raefsky_1979_IJSS,
Zerfas_Rebholz_Schneier_Iliescu_2019}.  
In \cite{Stanley_Stewart_2002}, it was argued, though only formally, that the sensitivity equations for the steady-state 2D Navier-Stokes equations are globally well-posed.  Some analysis for the sensitivity equations has been carried out in the slightly more general context of a large eddy simulation (LES) model of the 2D Navier-Stokes equations in an unpublished PhD thesis \cite{Pahlevani_2004}, where a formal argument for the global existence and uniqueness of the equations was given, based on formal energy estimates. 

The paper is organized as follows: 
in Section~\ref{secPre}, we describe the mathematical framework for the problems we consider.
In Section~\ref{secSensitivity} we prove the global existence and uniqueness of solutions to the sensitivity equations.  Moreover, we show that these solutions can be realized a limits of difference quotients.
In Section~\ref{secDA}, we extend in the previous section to the context of AOT data assimilation algorithm.  Finally, we summarize our results and implications of this work in Section \ref{secConclusion}.

\section{Preliminaries}\label{secPre}
The statements given in this section without proof for the incompressible Navier-Stokes equations are standard, and proofs can be found in, e.g.,  \cite{Constantin_Foias_1988,Foias_Manley_Rosa_Temam_2001,Robinson_2001,Temam_2001_Th_Num,Temam_1995_Fun_Anal}.  Similarly, equivalent results for the modified data assimilation equations given by the AOT algorithm are stated without proof as well, since proofs were given in \cite{Azouani_Olson_Titi_2014}.  On a general spatial domain $\Omega$, we write the dimensionless incompressible Navier-Stokes equations,
\begin{subequations} \label{NSEpre}
\begin{alignat}{2}
\label{NSE_mo_pre}
\partial_t\bu + (\bu\cdot\nabla)\bu &=-\nabla p + \RE_1^{-1} \triangle\bu + f,
\qquad&& \text{in }\Omega\times[0,T],\\
\label{NSE_div_pre}
\nabla \cdot \bu &=0,
\qquad&& \text{in }\Omega\times[0,T],\\
\label{NSE_IC_pre}
\bu(\bx,0)&=\bu_0(\bx),
\qquad&& \text{in }\Omega.
\end{alignat}
\end{subequations}
where $\RE_1 = \frac{UL}{\nu_1}$ is the dimensionless Reynolds number based on the kinematic viscosity $\nu_1 > 0$, a typical length scale $L$, and typical velocity $U$.
In this paper, we take $\Omega$ to be the torus, i.e. $\Omega = \mathbb{T}^2 = \mathbb{R}^2/\mathbb{Z}^2$, which is an open, bounded, and connected domain with $C^2$ boundary.  We define the space \[\mathcal{V}:= \{f: \Omega \to \mathbb{R}^2 \; | \;f \in \dot{C}_p^\infty(\mathbb{T}^2)\},\] and denote the closures in appropriate spaces by $H := \cnj{\mathcal{V}}$ in $L^2(\Omega; \nR^2)$ and $V := \cnj{\mathcal{V}}$ in $H^1(\Omega; \nR^2)$.  Since $H$ and $V$ are subspaces of $L^2(\Omega; \nR^2)$ and $H^1(\Omega; \nR^2)$, respectively, they are indeed Hilbert spaces which inherit inner products defined by
\begin{alignat}{2}
(\bu,\bv) = \int_{\mathbb{T}^2} \bu \cdot \bv  \; d\bx \qquad &&  ((\bu,\bv))= \sum\limits_{i,j =1}^2 \int_{\mathbb{T}^2} \frac{\partial u_i}{\partial x_j}\frac{\partial v_i}{\partial x_j} \;d\bx, \notag
\end{alignat}
with the obvious norms denoted by $|\bu| = \sqrt{(\bu,\bu)}$ and $\|\bu\| = \sqrt{((\bu,\bu))}$.  Furthermore, due to boundedness of the domain and the mean-zero condition, the following Poincar\'e inequalities hold:
$$\lambda_1\|\bu\|_{L^2}^2\leq\|\nabla\bu\|_{L^2}^2 \text{\quad for\quad} \bu\in V,$$
$$\lambda_1\|\nabla\bu\|_{L^2}^2\leq\|A\bu\|_{L^2}^2 \text{\quad for\quad} \bu\in D(A).$$


We consider the equivalent problem applying the Leray projection to \eqref{NSEpre}, where the Leray projection is defined as $P_\sigma \bu = \bu - \nabla \triangle^{-1} \nabla \cdot \bu$, $P_\sigma : L^2(\Omega) \to H$.  As in \cite{Azouani_Olson_Titi_2014}, we define the Stokes operator $A:\mathcal{D}(A) \to H$, where $\mathcal{D}(A) := \{u\in V: Au\in H\}$ is defined to be the domain of $A$, and the bilinear term $B: V\times V \to V^*$ as the continuous extensions of the operators $A$, defined on $\mathcal{V}$, and $B$, defined on $\mathcal{V} \times \mathcal{V}$,
\begin{alignat}{3}
A\bu = -P_\sigma \triangle \bu & \qquad \text{ and } \qquad & B(u,v) = P_\sigma(\bu \cdot \nabla \bv).\notag
\end{alignat}

We note that, as proven in, e.g., \cite{Constantin_Foias_1988, Robinson_2001, Temam_2001_Th_Num}, $A$ is a linear self-adjoint positive definite operator with a compact inverse.  Hence there exists a complete orthonormal set of eigenfunctions $w_i$ in $H$ such that $Aw_i = \lambda_iw_i$, where the corresponding  eigenvalues are strictly positive and monotonically increasing.  

The bilinear operator, $B$, has the property
\begin{align}\label{bilinear_symmetry}
\ip{B(\bu,\bv)}{\bw} = -\ip{B(\bu,\bw)}{\bv},
\end{align}
for all $\bu,\bv,\bw \in V$, which directly implies that
\begin{align}\label{Borth}
\ip{B(\bu,\bw)}{\bw} = 0,
\end{align}
for all $\bu,\bv,\bw \in V$.  Furthermore, as proven in, e.g., \cite{Constantin_Foias_1988, Robinson_2001, Temam_2001_Th_Num}, we have the following inequalities:
\begin{align}
|\ip{B(\bu,\bv)}{\bw}|&\leq \|\bu\|_{L^\infty(\Omega)}\|\bv\||\bw|  &\text{ for } \bu \in L^\infty(\Omega), \bv \in V, \bw \in H
\label{BINsimple} \\
|\ip{B(\bu,\bv)}{\bw}| &\leq c |\bu|^{1/2}\|\bu\|^{1/2} \|\bv\||\bw|^{1/2}\|\bw\|^{1/2} &\text{ for } \bu,\bv,\bw \in V, \label{BIN} \\
|(B(\bu,\bv),\bw)| &\leq c|\bu|^{1/2}\|\bu\|^{1/2}\|\bv\|^{1/2}|A\bv|^{1/2}|\bw|  &\text{ for } \bu\in V, \bv \in \mathcal{D}(A), \bw \in H \label{BIN2} \\
|(B(\bu,\bv),\bw)| &\leq c |\bu|^{1/2}|A\bu|^{1/2} \|\bv\||\bw| &\text{ for } \bu \in \mathcal{D}(A), \bv \in V, \bw \in H.\end{align}

Due to the periodic boundary conditions, it also holds (in 2D) that
\begin{align}\label{bilinear identity}
(B(\bw,\bw),A\bw)=0 \quad \text{ for every } \quad \bw \in \mathcal{D}(A).
\end{align}
Therefore, for $\bu, \bw \in \mathcal{D}(A)$,
\begin{align}\label{lastbilinear}
(B(\bu,\bw),A\bw)+(B(\bw,\bu),A\bw) = - (B(\bw,\bw),A\bu).
\end{align}

Additionally, further properties of the bilinear term are stated in Lemmas \ref{bilinear_unif_bd} and \ref{bilinear_wk_conv_v*}, which we prove using similar strategies as in \cite{Robinson_2001, Temam_2001_Th_Num}.

\begin{lemma}\label{bilinear_unif_bd}
 Suppose $\{\ba_n\}_{n\in\mathbb{N}}$ and $\{\bb_n\}_{n\in\mathbb{N}}$ are uniformly bounded sequences in $L^2(0,T;V) \cap L^\infty(0,T;H)$. Then $\|B(\ba_n,\bb_n)\|_{L^2(0,T;V^*)}$ is uniformly bounded in $n$.  Moreover, if $\{\ba_n\}_{n\in\mathbb{N}}$ and $\{\bb_n\}_{n\in\mathbb{N}}$ are uniformly bounded in $L^2(0,T;\mathcal{D}(A)) \cap L^\infty(0,T;V)$, then $\|B(\ba_n,\bb_n)\|_{L^2(0,T;H)}$ is uniformly bounded in $n$.
\end{lemma}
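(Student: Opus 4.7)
The plan is to obtain a pointwise-in-time bound on $\|B(\ba_n,\bb_n)\|_{V^*}$ (respectively $|B(\ba_n,\bb_n)|$) by duality, apply the appropriate bilinear estimate from \eqref{BIN}--\eqref{BIN2}, and then integrate in time using the assumed uniform bounds together with the Cauchy--Schwarz inequality.

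For the first claim, I would characterize $\|B(\ba_n,\bb_n)\|_{V^*}$ as the supremum of $|\ip{B(\ba_n,\bb_n)}{\bw}|$ over $\bw \in V$ with $\|\bw\| = 1$. Combining the symmetry property \eqref{bilinear_symmetry} with \eqref{BIN} gives
\[
|\ip{B(\ba_n,\bb_n)}{\bw}| = |\ip{B(\ba_n,\bw)}{\bb_n}| \leq c |\ba_n|^{1/2}\|\ba_n\|^{1/2}\|\bw\||\bb_n|^{1/2}\|\bb_n\|^{1/2}.
\]
Taking the supremum over $\|\bw\| = 1$, squaring, integrating over $[0,T]$, and applying Cauchy--Schwarz to the resulting time integral then yields
\[
\|B(\ba_n,\bb_n)\|_{L^2(0,T;V^*)}^2 \leq c^2 \|\ba_n\|_{L^\infty(0,T;H)} \|\bb_n\|_{L^\infty(0,T;H)} \|\ba_n\|_{L^2(0,T;V)} \|\bb_n\|_{L^2(0,T;V)},
\]
which is uniformly bounded in $n$ by hypothesis.

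For the second claim, I would use the self-duality of $H$ to write $|B(\ba_n,\bb_n)|$ as the supremum of $|\ip{B(\ba_n,\bb_n)}{\bw}|$ over $\bw \in H$ with $|\bw| = 1$, and then apply \eqref{BIN2} directly to obtain the pointwise bound
\[
|B(\ba_n,\bb_n)| \leq c |\ba_n|^{1/2}\|\ba_n\|^{1/2}\|\bb_n\|^{1/2}|A\bb_n|^{1/2}.
\]
Squaring, integrating in time, and applying Cauchy--Schwarz as before gives
\[
\|B(\ba_n,\bb_n)\|_{L^2(0,T;H)}^2 \leq c^2 \|\ba_n\|_{L^\infty(0,T;H)} \|\bb_n\|_{L^\infty(0,T;V)} \|\ba_n\|_{L^2(0,T;V)} \|\bb_n\|_{L^2(0,T;\mathcal{D}(A))},
\]
which is again uniformly bounded.

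The argument is essentially routine; the only real decision is which form of the bilinear estimate to apply so that, after distributing the norms via Cauchy--Schwarz on the time integral, every resulting factor is controlled by exactly one of the two function-space norms supplied by the hypotheses. I do not expect a substantive obstacle.
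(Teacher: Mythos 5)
Your proposal is correct and follows essentially the same route as the paper: duality plus the symmetry identity \eqref{bilinear_symmetry} and estimate \eqref{BIN} for the $V^*$ bound, and estimate \eqref{BIN2} for the $H$ bound, followed by H\"older/Cauchy--Schwarz in time. The only cosmetic difference is in the second part, where the paper uses Poincar\'e to consolidate the bound into $\|\ba_n\|_{L^\infty(0,T;V)}^2\|\bb_n\|_{L^2(0,T;\mathcal{D}(A))}^2$, whereas you keep the factors $\|\ba_n\|_{L^\infty(0,T;H)}$ and $\|\ba_n\|_{L^2(0,T;V)}$, which are controlled by the stated hypotheses via the same Poincar\'e inequality.
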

\begin{proof}
By the definition of the dual norm and \eqref{bilinear_symmetry}, 
 \begin{align*}
  \|B(\ba_n,\bb_n)\|_{V^*} &= \sup_{\stackrel{\bw \in V}{\|\bw\|=1}} |(B(\ba_n,\bb_n),\bw)|\\
  &= \sup_{\stackrel{\bw \in V}{\|\bw\|=1}} |(B(\ba_n,\bw),\bb_n)|,
 \end{align*}
and applying \eqref{BIN} we obtain
\begin{align*}
  \|B(\ba_n,\bb_n)\|_{V^*} &= \sup_{\stackrel{\bw \in V}{\|\bw\|=1}} |(B(\ba_n,\bw),\bb_n)|\\
  &\leq \sup_{\stackrel{\bw \in V}{\|\bw\|=1}} k |\ba_n|^{1/2}\|\ba_n\|^{1/2}|\bb_n|^{1/2}\|\bb_n\|^{1/2}\|\bw\| \\
  &= k |\ba_n|^{1/2}\|\ba_n\|^{1/2}|\bb_n|^{1/2}\|\bb_n\|^{1/2}.
 \end{align*}
  Using H{\"o}lder's inequality,
  \begin{align*}
   \|B(\ba_n,\bb_n)\|_{L^2(0,T;V^*)}^2 &\leq \int_0^T \|B(\ba_n(s),\bb_n(s)\|_{V^*}^2 ds \\
   &\leq \int_0^T k |\ba_n|\|\ba_n\||\bb_n|\|\bb_n\| ds \\
   &\leq k \|\ba_n\|_{L^\infty(0,T;H)}\|\bb_n\|_{L^\infty(0,T;H)} \int_0^T \|\ba_n(s)\|\|\bb_n(s)\| ds \\
   &\leq k \|\ba_n\|_{L^\infty(0,T;H)}\|\bb_n\|_{L^\infty(0,T;H)}\|\ba_n\|_{L^2(0,T;V)}\|\bb_n\|_{L^2(0,T;V)}.
  \end{align*}
  Hence, since $\{\ba_n\}_{n\in\mathbb{N}}$ and $\{\bb_n\}_{n\in\mathbb{N}}$ are uniformly bounded  in $L^2(0,T;V) \cap L^\infty(0,T;H)$, it follows that 
$\|B(\ba_n,\bb_n)\|_{L^2(0,T;V^*)}$ is uniformly bounded in $n$.

Next, suppose $\{\ba_n\}_{n\in\mathbb{N}}$ and $\{\bb_n\}_{n\in\mathbb{N}}$ are bounded uniformly in $L^2(0,T;\mathcal{D}(A)) \cap L^\infty(0,T;V)$.  Then by definition,
\begin{align*}
\|B(\ba_n,\bb_n)\|_{L^2(0,T;H)}^2 &= \int_0^T |B(\ba_n,\bb_n)|^2 dt \\
&\leq c \int_0^T |\ba_n|\|\ba_n\|\|\bb_n\||A\bb_n| dt \\
&\leq \frac{c}{\lambda_1}\|\ba_n\|^2_{L^\infty(0,T;V)} \int_0^T |A\bb_n|^2 dt\\
&\leq \frac{c}{\lambda_1}\|\ba_n\|^2_{L^\infty(0,T;V)} \int_0^T |A\bb_n|^2 dt\\
&= \frac{c}{\lambda_1}\|\ba_n\|^2_{L^\infty(0,T;V)} \|\bb_n\|_{L^2(0,T;\mathcal{D}(A))}^2,
\end{align*}
which implies $\|B(\ba_n,\bb_n)\|_{L^2(0,T;H)}$ is uniformly bounded in $n$.
\end{proof}

\begin{lemma}\label{bilinear_wk_conv_v*}
Suppose $\{\ba_n\}_{n\in\mathbb{N}}$ and $\{\bb_n\}_{n\in\mathbb{N}}$ are uniformly bounded sequences in $L^2(0,T;V) \cap L^\infty(0,T;H)$.  Furthermore, if the sequences $\{\ba_n\}, \{\bb_n\}$ converge to $\ba, \bb$, respectively, in $L^2(0,T;V)$ weakly and in $L^2(0,T;H)$ strongly, then $B(\ba_n,\bb_n) \stackrel{*}{\rightharpoonup} B(\ba,\bb)$ in $L^2(0,T;V^*)$.
\end{lemma}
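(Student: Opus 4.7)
The approach combines Lemma \ref{bilinear_unif_bd} with a density argument. Since $L^2(0,T;V)$ is a separable Hilbert space whose dual is $L^2(0,T;V^*)$, and since Lemma \ref{bilinear_unif_bd} gives that $\{B(\ba_n,\bb_n)\}_{n\in\nN}$ is uniformly bounded in $L^2(0,T;V^*)$, this sequence has weak-$*$ cluster points in $L^2(0,T;V^*)$. It suffices to identify every such cluster point as $B(\ba,\bb)$ by pairing against a dense subset of test functions, for instance $\bw \in C^\infty_c((0,T);\mathcal{V})$; by uniqueness of the weak-$*$ limit, the full sequence then converges.

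For such a smooth $\bw$, the first step is the bilinear decomposition
\begin{equation*}
B(\ba_n,\bb_n) - B(\ba,\bb) = B(\ba_n - \ba,\, \bb_n) + B(\ba,\, \bb_n - \bb),
\end{equation*}
followed by applying the antisymmetry identity \eqref{bilinear_symmetry} to the first piece, rewriting its pairing with $\bw$ as $-\ip{B(\ba_n - \ba, \bw)}{\bb_n}$. The purpose of this rearrangement is to move the difference $\ba_n - \ba$ into the first slot of $B$, so that the estimate \eqref{BIN} produces a factor of $|\ba_n - \ba|^{1/2}$ in the integrand --- precisely the piece that will decay under the hypothesis of strong $L^2(0,T;H)$ convergence.

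After applying \eqref{BIN} pointwise in time, pulling $\|\bw\|$ out as $\|\bw\|_{L^\infty(0,T;V)}$ and $|\bb_n|^{1/2}$ out as $\|\bb_n\|_{L^\infty(0,T;H)}^{1/2}$, one is left with the time-integral of three factors $|\ba_n - \ba|^{1/2}$, $\|\ba_n - \ba\|^{1/2}$, and $\|\bb_n\|^{1/2}$. H\"older's inequality with exponents $(4,4,2)$ then gives a bound of the form
\begin{equation*}
C\,\|\bw\|_{L^\infty(0,T;V)}\,\|\bb_n\|_{L^\infty(0,T;H)}^{1/2}\,\|\ba_n-\ba\|_{L^2(0,T;V)}^{1/2}\,\|\bb_n\|_{L^1(0,T;V)}^{1/2}\,\|\ba_n-\ba\|_{L^2(0,T;H)}^{1/2},
\end{equation*}
in which every factor is uniformly bounded by hypothesis (using $\|\bb_n\|_{L^1(V)} \leq T^{1/2}\|\bb_n\|_{L^2(V)}$) except $\|\ba_n-\ba\|_{L^2(0,T;H)}^{1/2}$, which tends to zero. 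The second piece $B(\ba, \bb_n - \bb)$ is treated symmetrically, exploiting instead the strong $L^2(0,T;H)$ convergence of $\{\bb_n\}$.

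The only mildly delicate point is the reduction to smooth test functions: for a generic $\bw \in L^2(0,T;V)$, the five pointwise factors produced by \eqref{BIN} cannot all be placed in their natural time-integrability classes while having reciprocal exponents that sum to one, so $\bw$ must be pulled out of the integral in $L^\infty(0,T;V)$. This obstruction is dispatched by the density of $C^\infty_c((0,T);\mathcal{V})$ in $L^2(0,T;V)$ together with the uniform $L^2(0,T;V^*)$ bound from Lemma \ref{bilinear_unif_bd}, which together extend the identification of the limit from the dense subset to all test functions in $L^2(0,T;V)$.
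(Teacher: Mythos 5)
Your proposal is correct and follows essentially the same route as the paper's proof: the same decomposition $B(\ba_n,\bb_n)-B(\ba,\bb)=B(\ba_n-\ba,\bb_n)+B(\ba,\bb_n-\bb)$, the same use of antisymmetry to place the differences where the strong $L^2(0,T;H)$ convergence can be exploited, and the same concluding density argument backed by the uniform $L^2(0,T;V^*)$ bound from Lemma \ref{bilinear_unif_bd}. The only cosmetic difference is that the paper routes the pointwise estimate through Ladyzhenskaya's and Poincar\'e's inequalities rather than quoting \eqref{BIN} directly and pulling out the $L^\infty(0,T;H)$ factor, which yields the same bound up to constants.
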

\begin{proof}

We need to show that, for each $\bw \in L^2(0,T;V)$, then 
\begin{align*}
\lim\limits_{n \to \infty} \int_0^T \ip{B(\ba_n,\bb_n)}{\bw} dt = \int_0^T \ip{B(\ba,\bb)}{\bw} dt.
\end{align*}
First, take $\bw\in C^1(0,T;C^1(\Omega))$.  Then using the identity \eqref{bilinear identity}, Ladyzhenskaya's, and Poincar{\'e}'s inequality,
\begin{align*}
    \Bigg|\int_0^T \ip{B(\ba_n,\bb_n)}{\bw} &- \ip{B(\ba,\bb)}{\bw} dt\Bigg| = \left|\int_0^T \ip{B(\ba_n,\bw)}{\bb_n} - \ip{B(\ba,\bw)}{\bb} dt \right| \\
    &= \left|\int_0^T (B(\ba_n-\ba,\bw),\bb_n) +  (B(\ba,\bw),\bb_n-\bb) dt \right| \\
    &\leq \int_0^T \|\ba\|_{L^4(\Omega)}\|\bw\|_V \|\bb_n-\bb\|_{L^4(\Omega)} + \|\ba_n-\ba\|_{L^4(\Omega)}\|\bw\|_V \|\bb_n\|_{L^4(\Omega)} dt \\
    &\leq C\lambda_1^{-1/4} \int_0^T \|\ba_n-\ba\|_H^{1/2} \|\ba_n-\ba\|_V^{1/2}\|\bw\|_V\|\bb_n\|_V  \\
    &\hphantom{aaa} + \|\ba\|_V\|\bw\|_V\|\bb_n-\bb\|_H^{1/2}\|\bb_n-\bb\|_V^{1/2}dt \\
    &\leq C\lambda_1^{-1/4} (\|\ba_n-\ba\|_{L^2(0,T;H)}^{1/2} \|\ba_n-\ba\|_{L^2(0,T;V)}^{1/2} \|\bw\|_{L^\infty(0,T;V)} \|\bb_n\|_{L^2(0,T;,V)} \\
    &\hphantom{aaa} + \|\ba\|_{L^2(0,T;V)} \|\bw\|_{L^\infty(0,T;V)} \|\bb_n-\bb\|_{L^2(0,T;H)}^{1/2} \|\bb_n-\bb\|_{L^2(0,T;V)}^{1/2}).
\end{align*}
By the hypotheses, we have that the $\|\ba_n-\ba\|_{L^2(0,T;H)}$ and $\|\bb_n-\bb\|_{L^2(0,T;H)}$ converge to $0$, and all the other terms are bounded (here, we are using that weakly convergence sequences around bounded).  The result now follows, using the density of $C^1(0,T;\mathcal{V})$ in $L^2(0,T;V)$ and the fact that $\|B(\ba_n,\bb_n)\|_{L^2(0,T;V^*)}$ is bounded by Lemma \ref{bilinear_unif_bd}
 \end{proof}
 
\begin{lemma}\label{bilinear_wk_conv}
Suppose $\{\ba_n\}_{n\in\mathbb{N}}$ and $\{\bb_n\}_{n\in\mathbb{N}}$ are uniformly bounded in $L^2(0,T;\mathcal{D}(A)) \cap L^\infty(0,T;V)$.  Furthermore, if $\ba, \bb \in L^2(0,T;V)$, $\ba_n \to \ba$ and $\bb_n \to \bb$ strongly in $L^2(0,T;V)$, and $\{\ba_n\}$ and $\{\bb_n\}$ are bounded above uniformly in $n$ in $L^2(0,T;\mathcal{D}(A))$, then $B(\ba_n,\bb_n) \rightharpoonup B(\ba,\bb)$ in $L^2(0,T;H)$.
\end{lemma}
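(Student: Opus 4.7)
The plan is to verify weak convergence directly from the definition: I need to show that for every $\bw \in L^2(0,T;H)$,
\[
\int_0^T (B(\ba_n,\bb_n) - B(\ba,\bb),\bw)\, dt \longrightarrow 0 \text{ as } n \to \infty.
\]
Before doing so, I would first upgrade the regularity of the limits $\ba$ and $\bb$ from the stated $L^2(0,T;V)$ regularity to $L^2(0,T;\mathcal{D}(A)) \cap L^\infty(0,T;V)$. This is done by weak and weak-$*$ compactness: since $\{\ba_n\}, \{\bb_n\}$ are uniformly bounded in the reflexive/weak-$*$ sequentially compact space $L^2(0,T;\mathcal{D}(A)) \cap L^\infty(0,T;V)$, passing to subsequences and using uniqueness of weak limits against the given strong $L^2(0,T;V)$ convergence identifies these weak limits with $\ba$ and $\bb$.

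The main estimate then proceeds by splitting
\[
B(\ba_n,\bb_n) - B(\ba,\bb) = B(\ba_n - \ba, \bb_n) + B(\ba, \bb_n - \bb)
\]
and bounding each pairing against $\bw(t)$ via inequality \eqref{BIN2}. For the first piece this yields
\[
|(B(\ba_n-\ba,\bb_n),\bw(t))| \leq c\, |\ba_n - \ba|^{1/2}\|\ba_n - \ba\|^{1/2}\|\bb_n\|^{1/2}|A\bb_n|^{1/2}|\bw(t)|.
\]
I would use the uniform $L^\infty(0,T;V) \hookrightarrow L^\infty(0,T;H)$ bounds to pull out $|\ba_n - \ba|^{1/2}$ and $\|\bb_n\|^{1/2}$, and then apply H\"older's inequality in time with exponents $(4,4,2)$ on the remaining factors $\|\ba_n - \ba\|^{1/2}$, $|A\bb_n|^{1/2}$, $|\bw(t)|$, obtaining
\[
\int_0^T |(B(\ba_n-\ba,\bb_n),\bw(t))|\, dt \leq C \|\ba_n - \ba\|_{L^2(0,T;V)}^{1/2}\|\bb_n\|_{L^2(0,T;\mathcal{D}(A))}^{1/2}\|\bw\|_{L^2(0,T;H)},
\]
which tends to zero by the strong $L^2(0,T;V)$ convergence $\ba_n \to \ba$ and the uniform higher-regularity bound on $\{\bb_n\}$. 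The second piece is handled symmetrically: using the upgraded regularity to bound $|\ba|^{1/2}\|\ba\|^{1/2}$ by $\|\ba\|_{L^\infty(0,T;V)}$, the analogous H\"older argument produces a bound proportional to $\|\bb_n - \bb\|_{L^2(0,T;V)}^{1/2}\|\bb_n - \bb\|_{L^2(0,T;\mathcal{D}(A))}^{1/2}\|\bw\|_{L^2(0,T;H)}$, which also tends to zero.

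The main obstacle is the H\"older bookkeeping in the step above: each pairing must be arranged so that exactly one factor carries the strong $L^2(0,T;V)$ convergence (producing the vanishing quantity), one factor is $\bw$ itself in $L^2(0,T;H)$, and every remaining factor is absorbed into a uniform $L^2(0,T;\mathcal{D}(A))$ or $L^\infty(0,T;V)$ bound. The regularity upgrade carried out at the outset is what makes the $\ba$-factor in the second piece tractable; without $\ba \in L^\infty(0,T;V)$ there is no clean way to dispose of the $\|\ba\|^{1/2}$ factor and the estimate would not close. Note also that, unlike the proof of Lemma~\ref{bilinear_wk_conv_v*}, no density argument on $\bw$ is needed here, since the bound obtained is already linear in $\|\bw\|_{L^2(0,T;H)}$.
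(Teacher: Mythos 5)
Your argument is correct, and it follows the paper's decomposition $B(\ba_n,\bb_n)-B(\ba,\bb)=B(\ba_n-\ba,\bb_n)+B(\ba,\bb_n-\bb)$, but the execution differs in two worthwhile ways. First, the paper tests against $\bw\in C(0,T;H)$, pulls out $\|\bw\|_{L^\infty(0,T;H)}$, estimates the second term via \eqref{BINsimple} together with Agmon's inequality (which forces the factor $\|\ba\|_{L^2(0,T;\mathcal{D}(A))}$ into the bound), and then closes with density of $C(0,T;H)$ in $L^2(0,T;H)$ plus the uniform bound on $B(\ba_n,\bb_n)$ from Lemma \ref{bilinear_unif_bd}; you instead use \eqref{BIN2} for both pieces and a $(4,4,2)$ H\"older split in time, which makes the bound linear in $\|\bw\|_{L^2(0,T;H)}$ for arbitrary $\bw$ and so dispenses with the density step and with Lemma \ref{bilinear_unif_bd} entirely. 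Second, you explicitly justify that the limits $\ba,\bb$ inherit the $L^2(0,T;\mathcal{D}(A))\cap L^\infty(0,T;V)$ regularity by weak/weak-$*$ compactness and uniqueness of limits; the paper uses $\|\ba\|_{L^2(0,T;\mathcal{D}(A))}$ in its final estimate without comment, so your version closes a small gap that the paper leaves implicit. The only bookkeeping point to make explicit in a final write-up is that $\|\bb_n-\bb\|_{L^2(0,T;\mathcal{D}(A))}^{1/2}$ in your second piece is uniformly bounded precisely because of that regularity upgrade for $\bb$; with that noted, both routes are complete, and yours is the more self-contained of the two.
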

\begin{proof}
Take $\bw \in C(0,T;H)$; then
\begin{align*}
 &\quad\int_0^T (B(\ba_n,\bb_n),\bw) - (B(\ba,\bb),\bw) dt \\
 &= \int_0^T (B(\ba_n-\ba,\bb_n),\bw) + (B(\ba,\bb_n - \bb),\bw) dt \\
 &\leq \int_0^T |(B(\ba_n-\ba,\bb_n),\bw)| dt + \int_0^T |(B(\ba,\bb_n - \bb),\bw)| dt
\end{align*}
Applying \eqref{BINsimple}, \eqref{BIN2}, Ladyzhenskaya's inequality, and Poincar{\'e}'s inequality we obtain
\begin{align*}
&\quad\int_0^T (B(\ba_n,\bb_n),\bw) - (B(\ba,\bb),\bw) dt \\
 &\leq c\lambda_1^{-1/2} \int_0^T \|\ba_n-\ba\| |A\bb^n| |\bw| dt + c\int_0^T \|\ba\|_{L^\infty(\Omega)}\|\bb_n-\bb\||\bw| dt.
\end{align*}
Applying Agmon's inequality, 
\begin{align*}
&\quad
\int_0^T (B(\ba_n,\bb_n),\bw) - (B(\ba,\bb),\bw) dt \\
 &\leq c\lambda_1^{-1/2} \int_0^T \|\ba_n-\ba\| |A\bb^n| |\bw| dt + c\int_0^T |\ba|^{1/2}|A\ba|^{1/2}\|\bb_n-\bb\||\bw| dt \\
 &\leq c\lambda_1^{-1/2} \int_0^T \|\ba_n-\ba\| |A\bb^n| \|\bw| dt + c\lambda_1^{-1/2}\int_0^T |A\ba|\|\bb_n-\bb\||\bw| dt\\
 &\leq c\lambda_1^{-1/2} \|\ba_n-\ba\|_{L^2(0,T;V)}\|\bw\|_{L^\infty(0,T;H)} \|\bb_n\|_{L^2(0,T;\mathcal{D}(A))} \\
 &\phantom{=} + c\lambda_1^{-1/2} \|\ba\|_{L^2(0,T;\mathcal{D}(A))} \|\bw\|_{L^\infty(0,T;H)}\|\bb_n-\bb\|_{L^2(0,T;V)}
\end{align*}
Since $\ba_n \to \ba$ in $L^2(0,T;V)$, $\bb_n \to \bb$ in $L^2(0,T;V)$, the sequences are bounded above uniformly in $L^2(0,T;\mathcal{D}(A))$, and $\bw$ is continuous in time, then \[\int_0^T (B(\ba_n,\bb_n),\bw) - (B(\ba,\bb),\bw) dt \to 0\]
as $n \to \infty$, and therefore by the density of $C(0,T;H)$ in $L^2(0,T;H)$ and the fact that $\|B(\ba_n,\bb_n)\|_{L^2(0,T;H)}$ is bounded uniformly by Lemma \ref{bilinear_unif_bd}, $B(\ba_n,\bb_n) \rightharpoonup B(\ba,\bb)$ in $L^2(0,T;H)$.
\end{proof}

Finally, without loss of generality, we make the assumption that $f \in L^\infty(0,T;H)$ so that $P_\sigma f = f$.  This allows us to apply $P_\sigma$ to \eqref{NSEpre} to obtain the equivalent set of equations
\begin{subequations} \label{NSE}
\begin{alignat}{2}
\label{NSE_mo} 
\frac{d}{dt}\bu +B(\bu,\bu) &=  \RE_1^{-1} A\bu + f,
\qquad&& \text{in }\Omega\times[0,T],\\
\label{NSE_IC}
\bu(\bx,0)&=\bu_0(\bx),
\qquad&& \text{in }\Omega.
\end{alignat}
\end{subequations}
Using the following corollary of de Rham's theorem \cite{Temam_2001_Th_Num, Foias_Manley_Rosa_Temam_2001}
\begin{align}
\bg = \nabla p \text{ with $p$ a distribution if and only if } \ip{\bg}{h} = 0 \text{ for all h$ \in \mathcal{V}$, }
\end{align}
we can recover the pressure term.

It is well-established that a unique global solution to \eqref{NSE} exists given a force $f$ and initial data $\bu_0$ in appropriate spaces.  For real world applications, it is important to consider the sensitivity of \eqref{NSEpre} to the parameters since it is not necessarily the case we have an exact estimate on the said parameters; however, additional uncertainty in modeling real world systems is introduced by the fact that we do not expect to know  $\bu_0$ exactly, and so cannot compute $\bu(t)$ from $\eqref{NSE}$.  Hence, we will also analyze the sensitivity equations corresponding to a modified system of equations that utilizes measured data collected on the true field $\bu(t)$ over the time interval $[0,T]$.  This modified system of equations incorporates the measured data by introducing a feedback control involving the interpolated data $I_h(\bu(t))$ into \eqref{NSE}, as is done in \cite{Azouani_Olson_Titi_2014},
\begin{subequations}\label{NSEmodified}
\begin{alignat}{2}
\label{NSEmodified_mo}
\frac{d}{dt}\bv + B(\bv,\bv) &= \RE_2^{-1}A\bv + f + \mu P_\sigma(I_h(\bu)-I_h(\bv)) \\
\label{NSEmodified_IC}
\bv(\bx,0)&=\bv_0(\bx).
\end{alignat}
\end{subequations}
Here, $\RE_2 = \frac{UL}{\nu_2}$ with $\nu_2$ a kinematic viscosity approximating $\nu_1$, $\mu$ is a positive relaxation parameter, and $I_h$ is a linear interpolant satisfying 
\begin{align}\label{IntIN}
\|\varphi-I_h(\varphi)\|^2_{L^2(\Omega)} \leq c_0 h^2\|\varphi\|^2_{H^1(\Omega)}
\end{align}

Assuming either no-slip Dirichlet or periodic boundary conditions, \cite{Azouani_Olson_Titi_2014} proved that \eqref{NSEmodified} has a unique solution, stated in the following theorem.
\begin{theorem}
Suppose $I_h$ satisfies \eqref{IntIN} and $\mu c_0h^2 \leq \RE_2^{-1}$, where $c_0$ is the constant from \eqref{IntIN}.  Then the continuous data assimilation equations \eqref{NSEmodified} possess unique strong solutions that satisfy
\begin{alignat}{2}
\bv \in C([0,T];V) \cap L^2((0,T); D(A)) && \text{and } \; \frac{d\bv}{dt} \in L^2((0,T);H),
\end{alignat}
for any $T > 0$.  Furthermore, this solution is in $C([0,T],V)$ and depends continuously on the initial data $\bv_0$ in the $V$ norm.
\end{theorem}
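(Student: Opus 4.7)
The plan is to construct solutions via a Galerkin truncation and then pass to the limit using the bilinear convergence results of Lemmas~\ref{bilinear_unif_bd}--\ref{bilinear_wk_conv}. Let $P_n$ denote the orthogonal projection in $H$ onto $\mathrm{span}\{w_1,\dots,w_n\}$, and consider the finite-dimensional approximation
\begin{equation*}
\frac{d\bv_n}{dt} + P_n B(\bv_n,\bv_n) = \RE_2^{-1}A\bv_n + P_n f + \mu P_n P_\sigma\pnt{I_h(\bu) - I_h(\bv_n)}, \qquad \bv_n(0) = P_n \bv_0.
\end{equation*}
Local existence and uniqueness is immediate from Picard-Lindel\"of on a finite-dimensional ODE; the rest of the argument is to derive $n$-independent bounds that both prolong the solution globally and permit extraction of a limit.

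The central step is the $V$-level energy estimate obtained by pairing the equation with $A\bv_n$. The nonlinear term drops out by the 2D identity \eqref{bilinear identity}, leaving
\begin{equation*}
\tfrac{1}{2}\tfrac{d}{dt}\|\bv_n\|^2 + \RE_2^{-1}|A\bv_n|^2 = (f,A\bv_n) + \mu\pnt{I_h(\bu) - I_h(\bv_n), A\bv_n}.
\end{equation*}
Writing $I_h(\bv_n) = \bv_n - \pnt{\bv_n - I_h(\bv_n)}$ and using $(\bv_n,A\bv_n)=\|\bv_n\|^2$ expresses the feedback piece as $-\mu\|\bv_n\|^2 + \mu(I_h(\bu),A\bv_n) + \mu(\bv_n - I_h(\bv_n), A\bv_n)$. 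The last summand is controlled, via Cauchy-Schwarz, the interpolation bound \eqref{IntIN}, and Young's inequality, by $\tfrac{\RE_2}{2}\mu^2 c_0 h^2\|\bv_n\|^2 + \tfrac{1}{2}\RE_2^{-1}|A\bv_n|^2$. The hypothesis $\mu c_0 h^2 \leq \RE_2^{-1}$ is precisely what forces the $\|\bv_n\|^2$ coefficient here to be at most $\tfrac{\mu}{2}$, so that the $-\mu\|\bv_n\|^2$ from above absorbs it with a positive constant to spare. Together with a routine $H$-level estimate (pairing with $\bv_n$) and Young bounds on $(f,A\bv_n)$ and $\mu(I_h(\bu),A\bv_n)$, Gr\"onwall's inequality supplies uniform-in-$n$ bounds for $\bv_n$ in $L^\infty(0,T;V)\cap L^2(0,T;\mathcal{D}(A))$. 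Reading the equation and invoking Lemma~\ref{bilinear_unif_bd} then gives a uniform $L^2(0,T;H)$ bound on $d\bv_n/dt$. I expect this absorption of the feedback control by viscosity to be the main obstacle, as it is the only place where the smallness condition on $\mu c_0 h^2$ is essential.

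Compactness (Aubin-Lions) extracts a subsequence converging weakly-$*$ in $L^\infty(0,T;V)$, weakly in $L^2(0,T;\mathcal{D}(A))$, and strongly in $L^2(0,T;V)$. The bilinear term passes to the limit by Lemma~\ref{bilinear_wk_conv}; the feedback term is linear and continuous in $\bv$; the remaining terms are linear in $\bv_n$. Hence the limit $\bv$ solves \eqref{NSEmodified}. The regularity $\bv \in C([0,T];V)$ then follows from the standard Lions-Magenes embedding, using $\bv \in L^2(0,T;\mathcal{D}(A))$ together with $d\bv/dt \in L^2(0,T;H)$.

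For uniqueness and continuous dependence, let $\bv_1,\bv_2$ be two strong solutions, set $\bw = \bv_1-\bv_2$, and pair the difference equation with $\bw$. The bilinear contribution reduces to $(B(\bw,\bv_2),\bw)$, which is handled by \eqref{BIN} and Young's inequality (absorbing a $\|\bw\|^2$ into $\RE_2^{-1}\|\bw\|^2$). The feedback term $-\mu(I_h(\bw),\bw) = -\mu|\bw|^2 + \mu(\bw - I_h(\bw),\bw)$ is again tamed by \eqref{IntIN} under the same smallness condition. Gr\"onwall then forces $\bw \equiv 0$ when initial data agree, and more generally yields continuous dependence of $\bv$ on $\bv_0$ in the $V$ norm.
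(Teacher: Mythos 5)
This theorem is stated in the paper without proof: the authors simply cite \cite{Azouani_Olson_Titi_2014}, where the result was established. Your Galerkin argument is, in substance, the proof given in that reference: the same $A\bv_n$ pairing, the same cancellation of the nonlinearity via the 2D periodic identity \eqref{bilinear identity}, and the same splitting $I_h(\bv_n)=\bv_n-(\bv_n-I_h(\bv_n))$ so that the hypothesis $\mu c_0h^2\leq \RE_2^{-1}$ lets $-\mu\|\bv_n\|^2$ absorb the interpolation error while half the viscous term survives to control $(f,A\bv_n)$ and $\mu(I_h(\bu),A\bv_n)$. The existence and uniqueness portions are sound.

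The one loose end is the final claim of continuous dependence on $\bv_0$ \emph{in the $V$ norm}. Pairing the difference equation with $\bw=\bv_1-\bv_2$ only yields a Gr\"onwall estimate for $|\bw|^2$, i.e.\ continuous dependence in $H$ (plus an $L^2(0,T;V)$ control of $\bw$). To get the stated $C([0,T];V)$ dependence you must additionally pair with $A\bw$; then the bilinear contributions $(B(\bw,\bv_2),A\bw)$ and $(B(\bv_1,\bw),A\bw)$ no longer vanish and must be estimated via \eqref{lastbilinear}, \eqref{BIN2}, and the $L^2(0,T;\mathcal{D}(A))\cap L^\infty(0,T;V)$ bounds on $\bv_1,\bv_2$ already in hand, with the feedback term handled exactly as in your a priori estimate. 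This is routine but is a genuinely separate estimate, not a corollary of the $H$-level one.
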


For equations \eqref{NSE} and \eqref{NSEmodified}, we denote the dimensionless Grashof numbers as
\begin{align}
G_1 &= \frac{\RE_1^2}{4\pi^2} \limsup\limits_{t \to \infty} \|f(t)\|_{L^2(\Omega)} \label {G1} \\ 
G_2 &= \frac{\RE_2^2}{4\pi^2} \limsup\limits_{t \to \infty} \|f(t)\|_{L^2(\Omega)} \label{G2}.
\end{align}
Finally, in 2D it is classical that \eqref{NSEpre} possesses a unique global strong solution. 

\section{Sensitivity for 2D Navier-Stokes}\label{secSensitivity}
In this section, we analyze the sensitivity of $\bw$ to the Reynolds number by considering individually the sensitivity of $\bu$ and $\bv$ to the Reynolds number.  We wish to consider taking a derivative of equations \eqref{NSE_mo} and \eqref{NSEmodified_mo} with respect to the Reynolds number.  This has been done formally in many works on sensitivity (see, e.g., \cite{Anderson_Newman_Whitfield_Nielsen_1999_AIAA, Borggaard_Burns_1997, Breckling_Neda_Pahlevani_2018_CMA, Davis_Pahlevani_2013, Fernandez_Moubachir_2002_MMMAS, Hamby_1994_EnvMonAssess, Hyoungjin_Chongam_Rho_DongLee_1999_KSIAM, Kouhi_Houzeaux_Cucchietti_Vazquez_2016_AIAAConf,Pahlevani_2004, Pahlevani_2006, Vemuri_Raefsky_1979_IJSS}), yielding what are known as the \textit{sensitivity equations}.  However, to the best of our knowledge, a rigorous treatment has yet to appear in the literature.  Therefore, we provide a rigorous justification here of the existence and uniqueness of weak and strong solutions to the sensitivity equations in the case of zero initial data, which is the natural data for the sensitivity equation, as discussed below.  Moreover, we prove that these solutions can be realized as limits of difference quotients of Navier-Stokes solutions with respect to different Reynolds numbers.  Indeed, this is the method of our existence proofs, rather than using, e.g., Galerkin methods, fixed-point methods, etc.  Proofs using limits of difference quotients have appeared in the literature before, such as in standard proofs of elliptic regularity, the corresponding result for the Stokes equations, etc.  However, in the present context (i.e., the time-dependent sensitivity equations for 2D Navier-Stokes), we believe such a proof strategy is novel.

Working formally for a moment, we take the derivative of \eqref{NSE_mo} with respect to $\RE$, and denote (again, formally) $\widetilde{\bu} := \frac{d\bu_1}{d(\RE_1^{-1})}$ and $\widetilde{p} := \frac{dp_1}{d(\RE_1^{-1})}$, to obtain
\begin{subequations}\label{NSEpreSens1}
\begin{alignat}{2}
 \widetilde{\bu}_t + \widetilde{\bu}\cdot \nabla \bu_1 + \bu_1\cdot \nabla \widetilde{\bu} - &\RE_1^{-1} \triangle \widetilde{\bu} - \triangle{
 \bu_1} + \nabla \widetilde{p} = 0, \\
 \nabla \cdot \widetilde{\bu} &= 0.
\end{alignat}
\end{subequations}
These are known as the sensitivity equations for the Navier-Stokes equations.  
Similarly, denoting $\widetilde{\bv} := \frac{d\bv_1}{d(\RE_1^{-1})}$ and $\widetilde{q} := \frac{dq_1}{d(\RE_1^{-1})}$, we formally obtain
\begin{subequations}\label{NSEmodifiedSens1}
\begin{alignat}{2}
 \widetilde{\bv}_t + \widetilde{\bv}\cdot \nabla \bv_1 + \bv_1\cdot \nabla \widetilde{\bv} - &\RE_1^{-1} \triangle \widetilde{\bv} - \triangle{
 \bv_1} + \nabla \widetilde{q} = \mu I_h(\widetilde{\bu}-\widetilde{\bv}), \\
 \nabla \cdot \widetilde{\bv} &= 0,
\end{alignat}
\end{subequations}
Below, we prove some well-posedness results for these systems in the case of zero initial data.  We begin by defining what we mean by solutions.

\begin{remark}
We note that the sensitivity equations are a model for the evolution of the instantaneous change in a solution with respect to changes in the (inverse) Reynolds number.  Therefore, the natural initial condition to consider is the case of identically-zero initial data.  Indeed, if the initial data for the sensitivity equations is not identically zero, this would correspond to the case where the initial data for the Navier-Stokes equations depends on the Reynolds number, or equivalently the viscosity, which is not typical of most mathematical treatments of the Navier-Stokes equations.  Thus, although we define weak solutions for general initial data, we only prove their existence for initial data which is identically zero.  Existence for general initial data can be proved using, e.g., Galerkin methods.  However, since our main focus is not on existence, but on showing the solutions can be realized as limits of a (sub)sequence of difference quotients, and moreover since the initial data is naturally taken to be zero in this setting, we use the difference quotient method instead.
\end{remark}

\begin{definition}\label{defweaksolnu} 
Let $T>0$.  Let $\bu\in L^2(0,T;V) \cap C_w(0,T;H)$ be a weak solution to \eqref{NSEpre} with initial data $\bu_0 \in V$ and forcing $f \in L^\infty(0,\infty;V^*)$. A \textit{weak solution} of \eqref{NSEpreSens1} is an element $\widetilde{\bu} \in L^2(0,T;V) \cap C_w(0,T;H)$ satisfying $\frac{d\widetilde{\bu}}{dt} \in L^1_{\text{loc}}(0,T; V^*)$ and
 \begin{align}\label{NSE_sens_weak_form}
  \ip{\widetilde{\bu}_t}{\mathbf{\phi}} + \ip{B(\widetilde{\bu}, \bu)}{ \mathbf{\phi}} + \ip{B(\bu, \widetilde{\bu})}{ \mathbf{\phi}}+\RE_1^{-1}\ip{A\widetilde{\bu}}{\mathbf{\phi}} + \ip{A\bu}{\mathbf{\phi}} = 0
 \end{align}
 for a.e. $t\in[0,T]$, for all $\mathbf{\phi} \in V$, and initial data $\widetilde{\bu}_0 \in H$, satisfied in the sense of $C_w(0,T;H)$. 
 
 If, in addition, $f \in L^\infty(0,\infty;H)$, $\bu_0 \in V$, $\widetilde{\bu}_0 \in V$, and $\bu\in L^2(0,T;V) \cap C_w(0,T;H)$ is a strong solution to \eqref{NSEpre}, then we define a \textit{strong solution} of \eqref{NSEpreSens1} to be a weak solution such that $\widetilde{\bu} \in L^2(0,T;\mathcal{D}(A)) \cap C^0([0,T]; V)$ and $\frac{d\widetilde{\bu}}{dt} \in L^2(0,T; H)$, satisfying \eqref{NSE_sens_weak_form} for a.e. $t \in [0,T]$ and for all $\phi \in H$.
\end{definition}

For the reasons discussed in Remark \ref{remark_no_weak_AOT_solutions} below, we only give a definition of strong solutions for the assimilation equations.

\begin{definition}\label{defweaksolnv} 
Let $T>0$.  Let $\bv$ be a strong solution to \eqref{NSEmodified} with initial data $\bv_0 \in V$ and forcing $f \in L^\infty(0,\infty;H)$.
A \textit{strong solution} of \eqref{NSEmodifiedSens1} is an element $\widetilde{\bv} \in L^2(0,T;\mathcal{D}(A)) \cap C^0([0,T]; V)$ that satisfies 
 \begin{align*}
  \ip{\widetilde{\bv}_t}{\mathbf{\phi}} + \ip{B(\widetilde{\bv}, \bv)}{\mathbf{\phi}} + &\ip{B(\bv, \widetilde{\bv})}{\mathbf{\phi}}+\RE_1^{-1}\ip{A\widetilde{\bv}}{\mathbf{\phi}}\\ &+ \ip{A\bv}{\mathbf{\phi}} = \mu\ip{I_h(\widetilde{\bu}-\widetilde{\bv})}{\mathbf{\phi}}
 \end{align*}
 this equation for a.e. $t \in [0,T]$ and for all $\phi \in H$, where  $\frac{d\widetilde{\bv}}{dt} \in L^2(0,T; H)$ and initial data $\widetilde{\bv}_0 \in V$.
\end{definition}

Before we prove the existence and uniqueness of solutions with zero initial data to these equations, we first consider equations for the difference quotients.  Note that, since these are simple arithmetic operations on the Navier-Stokes equations, the manipulations can be performed rigorously, not just formally.  To this end, let $(\bu_1,p_1)$ be a solution to \eqref{NSEpre} with Reynolds number $\RE_1$ and $(\bu_2,p_2)$ be a solution to \eqref{NSEpre} with Reynolds number $\RE_2$ with the same initial data.  We take the difference of the two versions of \eqref{NSEpre}, each with Reynolds numbers $\RE_1$ and $\RE_2$.  We then divide by the difference in (inverse) Reynolds numbers, yielding the system
\begin{subequations}\label{NSEpreSens}
\begin{alignat}{2}
 \bD_t + \bu_2\cdot \nabla \bD + \bD \cdot \nabla \bu_1 - &\RE_2^{-1} \triangle \bD - \triangle \bu_1 + \nabla P = 0, \\
 \nabla \cdot \bD &= 0, \\
 \bD(\bx,0) &= 0,
\end{alignat}
\end{subequations}
where $\bD = \frac{\bu_1-\bu_2}{\RE_1^{-1}-\RE_2^{-1}}$ and $P := \frac{p_1-p_2}{\RE_1^{-1}-\RE_2^{-1}}$.  As defined, $\bD$ is a strong solution to \eqref{NSEpreSens}, and note that $\bu_1 = (\RE_1^{-1}-\RE_2^{-1}) \bD + \bu_2$.  Additionally, $\bD \in L^2(0,T; \mathcal{D}(A)) \cap C^0([0,T];V)$ and $\bD_t \in L^2(0,T; H)$. However, we need to establish that $\bD$ is the unique solution to \eqref{NSEpreSens}, which is the content of Lemma \ref{D_unique} below. 

\begin{lemma}\label{D_unique}
 Let $T>0$ be given, and let $\bu_1$, $\bu_2\in L^2(0,T; \mathcal{D}(A)) \cap C^0([0,T];V)$ be strong solutions to \eqref{NSEmodified}, with Reynolds numbers $\RE_1$ and $\RE_2$, respectively.  There exists one and only one solution $\bD$ to \eqref{NSEpreSens} that lies in $L^2(0,T; \mathcal{D}(A)) \cap C^0([0,T];V)$, i.e. for all $\phi \in H$, 
 \begin{align*}
(\bD_t,\phi)  + (B(\bD,\bu_1),\phi) + (B(\bu_2,\bD),\phi) + &\RE_2^{-1} (A\bD,\phi) +(A \bu_1,\phi) = 0,
 \end{align*}
 where $\bD_t \in L^2(0,T;H)$.
\end{lemma}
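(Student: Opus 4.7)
The plan is to first note that existence of a solution in the stated class is essentially for free: the quotient $\bD = (\bu_1-\bu_2)/(\RE_1^{-1}-\RE_2^{-1})$ is constructed as a linear combination of two strong solutions, so its regularity $\bD \in L^2(0,T;\mathcal{D}(A)) \cap C^0([0,T];V)$ with $\bD_t \in L^2(0,T;H)$ is inherited directly from the strong-solution regularity of $\bu_1$ and $\bu_2$, and subtracting the two Leray-projected momentum equations and dividing by $\RE_1^{-1}-\RE_2^{-1}$ shows by inspection that the weak formulation \eqref{NSEpreSens} is satisfied. So the substantive content of the lemma is uniqueness.

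To prove uniqueness, I would suppose $\bD^{(1)}$ and $\bD^{(2)}$ are two solutions in the stated class, and write down the equation satisfied by their difference $\bz := \bD^{(1)}-\bD^{(2)}$. Because both solutions see the same $\bu_1$ and $\bu_2$, the inhomogeneous term $A\bu_1$ cancels and $\bz$ satisfies the homogeneous linear problem
\begin{align*}
\bz_t + B(\bu_2,\bz) + B(\bz,\bu_1) + \RE_2^{-1} A\bz = 0, \qquad \bz(0)=0.
\end{align*}
Since $\bz \in L^2(0,T;\mathcal{D}(A))$ and $\bz_t \in L^2(0,T;H)$, the Lions--Magenes lemma gives $\bz \in C([0,T];V)$ and validates the chain rule $\ip{\bz_t}{\bz} = \tfrac12 \tfrac{d}{dt}|\bz|^2$, so testing the equation against $\bz$ is rigorous.

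The energy identity then reads, using the orthogonality \eqref{Borth} to kill $(B(\bu_2,\bz),\bz)$,
\begin{align*}
\tfrac{1}{2}\tfrac{d}{dt}|\bz|^2 + \RE_2^{-1}\|\bz\|^2 = -(B(\bz,\bu_1),\bz).
\end{align*}
I would estimate the right-hand side with the bilinear bound \eqref{BIN} and Young's inequality,
\begin{align*}
|(B(\bz,\bu_1),\bz)| \le c\,|\bz|\,\|\bz\|\,\|\bu_1\| \le \tfrac{1}{2}\RE_2^{-1}\|\bz\|^2 + \tfrac{c^2}{2}\RE_2\,\|\bu_1\|^2\,|\bz|^2,
\end{align*}
absorb the gradient term, and arrive at $\tfrac{d}{dt}|\bz|^2 \le c^2\RE_2\,\|\bu_1\|^2|\bz|^2$. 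Because $\bu_1$ is a strong solution of \eqref{NSE}, $\|\bu_1\|^2 \in L^1(0,T)$, so Grönwall combined with $\bz(0)=0$ gives $\bz \equiv 0$ on $[0,T]$.

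The only delicate point I anticipate is the justification of the chain-rule step that turns the distributional pairing into $\tfrac{d}{dt}|\bz|^2$; this is the standard Lions--Magenes argument and is in fact overkill here since the regularity of $\bD^{(i)}$ is one level above what is needed. Every other ingredient---the cancellation of the forcing, the vanishing of $(B(\bu_2,\bz),\bz)$, and the integrability of $\|\bu_1\|^2$---is immediate from the hypotheses and the general framework laid out in Section \ref{secPre}.
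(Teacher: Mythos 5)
Your proof is correct and takes essentially the same approach as the paper: the paper only writes out the proof for Lemma \ref{D_prime_unique} (the data-assimilation version) and notes that Lemma \ref{D_unique} follows by the identical argument with $\mu=0$, which is precisely the difference-plus-energy-estimate-plus-Gr\"onwall uniqueness argument you give, and your observation that existence is inherited by construction from the regularity of $\bu_1$ and $\bu_2$ matches the paper's discussion immediately preceding the lemma.
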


Next, we consider difference quotients for the assimilation system \eqref{NSEmodified}.  Let $(\bv_1,q_1)$ be the solution to \eqref{NSEmodified} with Reynolds number $\RE_1$ and $(\bv_2,q_2)$ be the solution to \eqref{NSEmodified} with Reynolds number $\RE_2$.  Subtracting the two equations and dividing by the difference in the (inverse) Reynolds numbers yields the system \eqref{NSEmodifiedSens},
\begin{subequations}\label{NSEmodifiedSens}
\begin{alignat}{2}
\bD'_t + \bD'\cdot \nabla \bv_1 + \bv_2\cdot \nabla \bD' - &\RE_2^{-1} \triangle \bD' - \triangle \bv_1 + \nabla Q = \mu I_h(\bD-\bD') \\
\nabla \cdot \bD' &= 0\\
\bD'(\bx,0) &= 0,
\end{alignat}
\end{subequations}
where $\bD' := \frac{\bv_1-\bv_2}{\RE_1^{-1}-\RE_2^{-1}}$ and $Q := \frac{q_1-q_2}{\RE_1^{-1}-\RE_2^{-1}}$.  As defined, $\bD'$ is a strong solution to \eqref{NSEmodifiedSens}, and note that $\bv_1 = (\RE_1^{-1}-\RE_2^{-1})\bD' + \bv_2$.  Additionally, $\bD' \in L^2(0,T; \mathcal{D}(A)) \cap C^0([0,T];V)$ and $\bD'_t \in L^2(0,T; H)$.

\begin{lemma}\label{D_prime_unique}
Let $T>0$ be given ,and let $\bv_1$, $\bv_2\in L^2(0,T; \mathcal{D}(A)) \cap C^0([0,T];V)$ be strong solutions to \eqref{NSEmodified}, with Reynolds numbers $\RE_1$ and $\RE_2$, respectively.  There exists a unique strong solution $\bD$ to \eqref{NSEmodifiedSens} that lies in $L^2(0,T; \mathcal{D}(A)) \cap C^0([0,T];V)$, in the sense that for all $\phi \in H$, 
 \begin{align*}
&(\bD_t',\phi) + (B(\bv_2,\bD'),\phi) + (B(\bD',\nabla \bv_1),\phi) + &\RE_2^{-1} (A\bD',\phi) +(A \bv_1,\phi) 
\\&= \mu(P_\sigma I_h(\bD-\bD'),\phi),
 \end{align*}
 where $\bD'_t \in L^2(0,T;H)$.
\end{lemma}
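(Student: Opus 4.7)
Existence of a solution in the required class is immediate from the construction preceding the lemma: since $\bv_1, \bv_2 \in L^2(0,T;\mathcal{D}(A)) \cap C^0([0,T];V)$ are strong solutions of \eqref{NSEmodified}, the function $\bD' := (\bv_1-\bv_2)/(\RE_1^{-1}-\RE_2^{-1})$ inherits this regularity, and direct subtraction of the two copies of \eqref{NSEmodified_mo} shows that it satisfies the weak formulation of \eqref{NSEmodifiedSens} with $\bD'_t \in L^2(0,T;H)$. Hence only uniqueness needs to be proved, and the argument parallels the proof of Lemma \ref{D_unique} with one additional term coming from the nudging.

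For uniqueness, suppose $\bD'_1, \bD'_2 \in L^2(0,T;\mathcal{D}(A)) \cap C^0([0,T];V)$ are two strong solutions and set $\bw := \bD'_1 - \bD'_2$. Since the terms $A\bv_1$ and $\mu P_\sigma I_h(\bD)$ in \eqref{NSEmodifiedSens} do not depend on $\bD'$, they cancel upon subtraction, and the linearity of the remaining operators in $\bD'$ yields that $\bw$ satisfies the homogeneous equation
\begin{align*}
\ip{\bw_t}{\phi} + \ip{B(\bv_2,\bw)}{\phi} + \ip{B(\bw,\bv_1)}{\phi} + \RE_2^{-1}\ip{A\bw}{\phi} + \mu\ip{P_\sigma I_h(\bw)}{\phi} = 0
\end{align*}
for a.e. $t \in [0,T]$ and all $\phi \in H$, with $\bw(0)=0$.

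The plan is a standard energy estimate: test the above with $\phi = \bw$ (legitimate since the regularity of $\bw$ and $\bw_t$ yields $\bw \in C([0,T];V)$). The term $\ip{B(\bv_2,\bw)}{\bw}$ vanishes by \eqref{Borth}; the term $\ip{B(\bw,\bv_1)}{\bw}$ is bounded by $c|\bw|\|\bw\|\|\bv_1\|$ via \eqref{BIN} and absorbed, through Young's inequality, into a fraction of $\RE_2^{-1}\|\bw\|^2$, contributing a coefficient $C\RE_2\|\bv_1\|^2 |\bw|^2$ that is integrable in time because $\bv_1 \in L^2(0,T;V)$. The delicate point is the nudging term: splitting $I_h(\bw) = \bw + (I_h(\bw)-\bw)$ and applying \eqref{IntIN} gives
\begin{align*}
|\mu\ip{P_\sigma I_h(\bw)}{\bw}| \leq \mu|\bw|^2 + \mu c_0^{1/2} h\|\bw\||\bw|,
\end{align*}
and the standing hypothesis $\mu c_0 h^2 \leq \RE_2^{-1}$ is exactly what is needed to absorb the $\|\bw\||\bw|$ term into another fraction of $\RE_2^{-1}\|\bw\|^2$ via Young's inequality.

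Combining these estimates produces a differential inequality of the form
\begin{align*}
\frac{d}{dt}|\bw|^2 \leq C\bigl(\|\bv_1\|^2 + \mu + 1\bigr)|\bw|^2,
\end{align*}
with the coefficient in $L^1(0,T)$. Grönwall's inequality together with $\bw(0)=0$ then forces $\bw\equiv 0$, establishing uniqueness. The only genuine subtlety in this plan is the interplay between the feedback term $\mu P_\sigma I_h(\bw)$ and the viscous dissipation $\RE_2^{-1} A\bw$; the hypothesis $\mu c_0 h^2 \leq \RE_2^{-1}$ inherited from the AOT well-posedness theory is precisely what prevents the nudging from generating an unabsorbable $\|\bw\|^2$ contribution, and the remainder of the argument is a routine Grönwall closure.
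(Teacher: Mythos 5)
Your proposal is correct and follows essentially the same route as the paper: subtract the two candidate solutions, perform an $L^2$ energy estimate using \eqref{Borth} and \eqref{BIN} for the bilinear terms and \eqref{IntIN} together with Poincar\'e and Young for the nudging term, and close with Gr\"onwall starting from $\bw(0)=0$. The only cosmetic difference is that the paper's closure does not actually require the hypothesis $\mu c_0 h^2 \leq \RE_2^{-1}$ --- the interpolant contribution is simply absorbed into the Gr\"onwall coefficient with a finite (possibly large) constant --- so your remark that this condition is ``exactly what is needed'' slightly overstates its role in the uniqueness argument.
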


\begin{remark}\label{remark_no_weak_AOT_solutions}
The proofs of the above two lemmata are very similar; hence, we only present the proof of Lemma \ref{D_prime_unique}.  Moreover, we also note that in the case $\mu=0$, the proof of Lemma \ref{D_unique} holds \textit{mutatis mutandis} in the case where $\bu_1$, $\bu_2 \in C^0([0,T];H)\cap L^2(0,T; V)$ are only assumed to be weak solutions to the 2D Navier-Stokes equations, and then one obtains uniqueness of weak solutions to \eqref{NSEpreSens} in the class $C^0([0,T];H)\cap L^2(0,T;V)$.  However, in the case $\mu>0$, the notion of weak solutions for the assimilation equations \eqref{NSEmodified} has not been established in the literature for general interpolants $I_h$, and therefore we assume that the solutions $\bv_1$ and $\bv_2$ are strong solutions to \eqref{NSEmodified}, and prove the uniqueness of strong solutions to \eqref{NSEpreSens}.
\end{remark}

\begin{proof}
 Suppose there exist two solutions $\bD'_1$ and $\bD'_2$.  We consider the difference of the equations
\begin{align}
 \frac{d}{dt} \bD'_1 + B(\bD'_1, \bv_1) + B(\bv_2, \bD'_1) + \RE_2^{-1} A\bD'_1 + A\bv_1 = \mu P_\sigma I_h(\bD-\bD'_1)
\end{align}
and
\begin{align}
 \frac{d}{dt} \bD'_2 + B(\bD'_2, \bv_1) + B(\bv_2, \bD'_2) + \RE_2^{-1} A\bD'_2 + A\bv_1 = \mu P_\sigma I_h(\bD-\bD'_2)
\end{align}
which, defining $\bV := \bD'_1-\bD'_2$, yields
\begin{align}
 \bV_t + B(\bV,\bv_1) + B(\bv_2, \bV) + \RE_2^{-1} A\bV = -\mu P_\sigma I_h(\bV)
\end{align}
with $\bV(0) = 0$.  So, $\bV$ must be a solution to the above equation. Taking the inner product with $\bV$, 
\begin{align}
\frac{1}{2} \frac{d}{dt} |\bV|^2 + b(\bV,\bv_1,\bV) + \RE_2^{-1}\|\bV\|^2 = \ip{-\mu P_\sigma I_h(\bV)}{\bV}
\end{align}
which implies, applying the triangle inequality and Poisson's inequality to the interpolant term as in \cite{Azouani_Olson_Titi_2014},
\begin{align}
 &\quad
 \frac{1}{2} \frac{d}{dt} |\bV|^2 + \RE_2^{-1}\|\bV\|^2 
 \\&\leq\notag 
 c\|\bv_1\||\bV|\|\bV\| + \mu(\sqrt{c_0}h + \lambda_1^{-1})\|\bV\||\bV|
 \\&\leq\notag
 \frac{\mu^2(\sqrt{c_0}h + \lambda_1^{-1})^2}{\RE_2^{-1}} |\bV|^2 + \frac{\RE_2^{-1}}{4} \|\bV\|^2 + \frac{c^2}{2\RE_2^{-1}}\|\bv_1\|^2|\bV|^2 + \frac{\RE_2^{-1}}{2}\|\bV\|^2.
\end{align}
Thus, 
\begin{align}
 \frac{d}{dt} |\bV|^2 &\leq \Big( \frac{\mu^2(\sqrt{c_0}h + \lambda_1^{-1})^2}{\RE_2^{-1}} + \frac{c^2}{2\RE_2^{-1}}\|\bv_1\|^2\Big)|\bV|^2
\end{align}
and Gr{\"o}nwall's inequality implies
\begin{align}
 |\bV(T)|^2 \leq |\bV(0)|^2 \text{exp}\Big(\int_0^T  \frac{\mu^2(\sqrt{c_0}h + \lambda_1^{-1})^2}{\RE_2^{-1}} + \frac{c^2}{2\RE_2^{-1}}\|\bv\|^2 dt \Big).
\end{align}
But $\bV(0) = 0$, and thus $\|\bV\|_{L^\infty(0,T;H)} = 0$ implies that $\bV \equiv 0$.  Hence, solutions to \eqref{NSEmodifiedSens} are unique.
\end{proof}

Since systems \eqref{NSEpreSens} and \eqref{NSEmodifiedSens} have unique strong solutions for every $\RE_2^{-1}>0$, we want to show that, as $\RE_2 \to \RE_1$, the solutions to these equations converge to the unique strong solutions of the respective equations (in the sense of Definitions \ref{defweaksolnu} and \ref{defweaksolnv}) of the formal sensitivity equations \eqref{NSEpreSens1} and \eqref{NSEmodifiedSens1} with initial data $\bu_0\equiv 0$.  We additionally prove that weak solutions exist for the sensitivity equations \eqref{NSEpreSens1} with initial data $\bu_0\equiv 0$.

\begin{theorem}\label{ohhappyday1weak}
Let $\{(\RE_2^{-1})_n\}_{n \in \mathbb{N}}$ be a sequence such that $(\RE_2^{-1})_n \to \RE_1^{-1}$ as $n \to \infty$.  Let
\begin{itemize}
 \item  $\bu$ be a solution to \eqref{NSEpre} with Reynolds number $\RE_1^{-1}$, forcing $f \in L^\infty(0,\infty;H)$, and initial data $\bu_0\equiv\mathbf{0}$;
 \item $\bu_2^n$ solve \eqref{NSEpre} with Reynolds number $(\RE_2^{-1})_n$, forcing $f \in L^\infty(0,\infty;H)$, and initial data $\bu_0 \in V$;
 \item $\{\bD^n\}_{n \in \mathbb{N}}$ be a sequence of strong solutions to \eqref{NSEpreSens} with $\bD^n(0) = 0$.
 \end{itemize}
 Then there is a subsequence of $\{\bD^n\}_{n \in \mathbb{N}}$ that converges in $L^2(0,T;H)$ to a unique weak solution $\bD$ of \eqref{NSEpreSens1} with initial data $\bu_0\equiv 0$ for any $T>0$.
\end{theorem}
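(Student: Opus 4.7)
The plan is to carry out a compactness/passage-to-the-limit argument, leveraging the uniqueness of the difference-quotient system \eqref{NSEpreSens} from Lemma~\ref{D_unique} and the bilinear weak-convergence machinery in Lemmas~\ref{bilinear_unif_bd}--\ref{bilinear_wk_conv_v*}. First I would pair \eqref{NSEpreSens} with $\bD^n$, use the orthogonality $(B(\bu_2^n,\bD^n),\bD^n)=0$, and invoke \eqref{BIN} to get $|(B(\bD^n,\bu),\bD^n)|\leq c|\bD^n|\,\|\bu\|\,\|\bD^n\|$, together with $|(A\bu,\bD^n)|=|((\bu,\bD^n))|\leq \|\bu\|\,\|\bD^n\|$. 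Young's inequality absorbs these into the $(\RE_2^{-1})_n\|\bD^n\|^2$ dissipation. Since $(\RE_2^{-1})_n$ is bounded away from $0$ and $\infty$ for large $n$ and $\|\bu\|\in L^\infty(0,T)$ (standard for a 2D Navier--Stokes strong solution with $f\in L^\infty(0,\infty;H)$ and initial data in $V$), Gr\"onwall's inequality yields a uniform bound on $\{\bD^n\}$ in $L^\infty(0,T;H)\cap L^2(0,T;V)$.

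Next, reading $\bD^n_t=-B(\bu_2^n,\bD^n)-B(\bD^n,\bu)-(\RE_2^{-1})_n A\bD^n-A\bu$ off the equation, I would use Lemma~\ref{bilinear_unif_bd} to control the bilinear terms in $L^2(0,T;V^*)$ and bound the linear terms directly, producing a uniform bound on $\bD^n_t$ in $L^2(0,T;V^*)$. An Aubin--Lions argument then furnishes a (non-relabeled) subsequence with $\bD^n\rightharpoonup \bD$ in $L^2(0,T;V)$, $\bD^n\stackrel{*}{\rightharpoonup}\bD$ in $L^\infty(0,T;H)$, and $\bD^n\to\bD$ strongly in $L^2(0,T;H)$. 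Separately, I would establish the classical continuous dependence $\bu_2^n\to\bu$ strongly in $L^2(0,T;H)$ and weakly in $L^2(0,T;V)$ via an energy estimate on $\bu_2^n-\bu$ whose source term contains the vanishing factor $(\RE_2^{-1})_n-\RE_1^{-1}$ multiplying $A\bu$. Lemma~\ref{bilinear_wk_conv_v*} then passes the nonlinear terms to $B(\bD,\bu)$ and $B(\bu,\bD)$ in $L^2(0,T;V^*)$; the dissipation $(\RE_2^{-1})_n(A\bD^n,\phi)$ converges to $\RE_1^{-1}(A\bD,\phi)$ by weak $V$-convergence combined with $(\RE_2^{-1})_n\to\RE_1^{-1}$, and the $(A\bu,\phi)$ term is independent of $n$. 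Thus $\bD$ satisfies \eqref{NSE_sens_weak_form} for a.e.\ $t\in[0,T]$, the membership $\bD\in C_w([0,T];H)$ follows from $\bD\in L^\infty(0,T;H)$ with $\bD_t\in L^2(0,T;V^*)$, and the initial condition $\bD(0)=\mathbf{0}$ is inherited from $\bD^n(0)=\mathbf{0}$ by testing against smooth functions and taking $t\to 0^+$.

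For uniqueness, given two weak solutions $\bD_a,\bD_b$ with zero initial data, set $\bV:=\bD_a-\bD_b$, which satisfies $\bV_t+B(\bV,\bu)+B(\bu,\bV)+\RE_1^{-1}A\bV=0$ with $\bV(0)=\mathbf{0}$. Pairing with $\bV$ (justified by the Lions lemma, using $\bV\in L^2(0,T;V)$ and $\bV_t\in L^2(0,T;V^*)$), using $(B(\bu,\bV),\bV)=0$, bounding the remaining bilinear term via \eqref{BIN}, Young's inequality, and Gr\"onwall force $\bV\equiv 0$. The principal technical point I anticipate is the cross-term $B(\bu_2^n,\bD^n)$: both factors vary with $n$, so the limit passage requires coordinating the independent convergences of $\bu_2^n\to\bu$ and $\bD^n\to\bD$ so that the joint hypotheses of Lemma~\ref{bilinear_wk_conv_v*} (weak $L^2(V)$ and strong $L^2(H)$ for each factor) hold simultaneously; everything else in the argument is essentially a careful application of the energy method.
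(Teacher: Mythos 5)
Your proposal is correct and follows essentially the same route as the paper: an energy estimate on $\bD^n$ (using the orthogonality of $B(\bu_2^n,\bD^n)$ against $\bD^n$) giving uniform $L^\infty(0,T;H)\cap L^2(0,T;V)$ bounds, a $V^*$-bound on $\bD^n_t$ via Lemma~\ref{bilinear_unif_bd}, Aubin--Lions compactness, passage to the limit in the bilinear terms via Lemma~\ref{bilinear_wk_conv_v*}, and uniqueness by the Gr\"onwall argument. The only cosmetic difference is that you obtain $\bu_2^n\to\bu_1$ by a direct continuous-dependence estimate with source $((\RE_2^{-1})_n-\RE_1^{-1})A\bu$, whereas the paper extracts a convergent subsequence from uniform bounds and identifies the limit by uniqueness; both are fine.
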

\begin{proof}
Let $T>0$ be given. Let $N$ sufficiently large such that for all $n>N$, $\{(\RE_2^{-1})_n\}_{n\in\mathbb{N}} \subset (\frac{\RE_1^{-1}}{2}, \frac{3\RE_1^{-1}}{2})$.  Then, we can follow the proof of strong solutions for \eqref{NSEpre} as in, e.g., \cite{Constantin_Foias_1988, Foias_Manley_Rosa_Temam_2001, Robinson_2001, Temam_2001_Th_Num}, to obtain bounds on $\{\bu_2^n\}$ for $n > N$ in the appropriate spaces that are independent of $(\RE_2^{-1})_n$:
\begin{align*}
\|\bu_2^n\|^2_{L^\infty(0,T;V)} &\leq \|\bu_2^n(0)\|^2 + \frac{ \|f\|^2_{L^2(0,T;H)}}{(\RE_2^{-1})_n} \\
&\leq  \|\bu_0\|^2 + \frac{ 2\|f\|^2_{L^2(0,T;H)}}{\RE_1^{-1}}
\end{align*}
and
\begin{align*}
\|\bu_2^n\|^2_{L^2(0,T;\mathcal{D}(A))} &\leq \frac{1}{(\RE_2^{-1})_n}\|\bu_2^n(0)\|^2 + \frac{ \|f\|^2_{L^2(0,T;H)}}{(\RE_2^{-1})_n^2} \\
&\leq \frac{2}{\RE_1^{-1}}\|\bu_0\|^2 + \frac{ 4\|f\|^2_{L^2(0,T;H)}}{(\RE_1^{-1})^2}.
\end{align*}
Note that $\|f\|^2_{L^2(0,T;H)} < \infty$ since all bounded functions are locally integrable.  Hence there is a subsequence that is relabeled $\bu_2^n \to \bu$ in $L^2(0,T;V)$ for some function $\bu$.  Continuing to follow the proof of strong solutions for \eqref{NSEpre} as in e.g. \cite{Constantin_Foias_1988, Foias_Manley_Rosa_Temam_2001, Robinson_2001, Temam_2001_Th_Num}, we note that $\frac{d \bu_2^n}{dt}$ is uniformly bounded in $n$ in $L^2(0,T;H)$.  Hence, we can find a subsequence which we relabel $\{\bu_2^n\}$ such that
\begin{align*}
 \frac{d \bu_2^n}{dt} \rightharpoonup \frac{d \bu}{dt} \qquad &\text{ in } L^2(0,T;H) \\
(\RE_2^{-1})_n A\bu_2^n \rightharpoonup \RE_1^{-1} A\bu \qquad &\text{ in } L^2(0,T;H) \\
B(\bu_2^n, \bu_2^n) \rightharpoonup B(\bu,\bu) \qquad &\text{ in } L^2(0,T;H).
\end{align*}
Indeed, $\bu$ satisfies \eqref{NSEpre} with corresponding Reynolds number $\RE_1^{-1}$ and thus, by uniqueness and the fact that $\bu_2^n \to \bu$ in $V$, $\bu_1 = \bu$.  Due to Poincar{\'e}'s inequality, we also obtain that $\bu_2^n \to \bu_1$ in $L^2(0,T;H)$.

Let $\bD^n$ be the strong solution to \eqref{NSEpreSens} with $\RE_1^{-1} = (\RE_2^{-1})_n$.  Taking the action of \eqref{NSEpreSens} on $\bD^n$ and using H{\"o}lder's, the bilinear inequalities, and Young's inequality twice, we obtain
\begin{align*}
 \frac{1}{2} \frac{d}{dt} |\bD^n|^2 + (\RE_2^{-1})_n \|\bD^n\|^2 &\leq 
 \frac{c^2}{(\RE_2^{-1})_n} \|\bu_1\|^2 |\bD^n|^2 + \frac{(\RE_2^{-1})_n}{4}\|\bD^n\|^2 \\
 &\phantom{=} + \frac{1}{2(\RE_2^{-1})_n}\|\bu_1\|^2 + \frac{(\RE_2^{-1})_n}{2}\|\bD^n\|^2,
\end{align*}
giving
\begin{align}\label{mainbound_Dn}
 \frac{1}{2}\frac{d}{dt}|\bD^n|^2 + \frac{(\RE_2^{-1})_n}{4}\|\bD^n\|^2 \leq  \frac{c^2}{(\RE_2^{-1})_n} \|\bu_1\|^2 |\bD^n|^2 + \frac{1}{2(\RE_2^{-1})_n}\|\bu_1\|^2.
\end{align}

Dropping the second term on the left hand side, we obtain
\begin{align*}
  \frac{1}{2}\frac{d}{dt}|\bD^n|^2 \leq  \frac{c^2}{(\RE_2^{-1})_n} \|\bu_1\|^2 |\bD^n|^2 + \frac{1}{2(\RE_2^{-1})_n}\|\bu_1\|^2.
\end{align*}

Taking the integral with respect to time on $[0,T]$ and applying Gr{\"o}nwall's inequality, then for a.e. $t \in [0,T]$,
\begin{align*}
 |\bD^n(t)|^2 &\leq \Big[\frac{1}{(\RE_2^{-1})_n}\int_0^T \|\bu_1\|^2 dt \Big] \text{exp}\Big(\int_0^T \frac{2c^2}{(\RE_2^{-1})_n} \|\bu_1\|^2 dt\Big) \\
 &\leq \Big[\frac{2}{\RE_1^{-1}}\int_0^T \|\bu_1\|^2 dt \Big] \text{exp}\Big(\int_0^T \frac{4c^2}{\RE_1^{-1}} \|\bu_1\|^2 dt\Big) =:K_1.
\end{align*}
Since $\bu_1 \in L^2(0,T; V)$, then $\bD^n$ is bounded above uniformly in $L^\infty(0,T;H)$.

Next, refraining from dropping the second term on the left hand side of \eqref{mainbound_Dn}, we estimate
\begin{align*}
 \frac{(\RE_2^{-1})_n}{4} \int_0^T \|\bD^n\|^2 dt &\leq \frac{c^2}{(\RE_2^{-1})_n} \int_0^T\|\bu_1\|^2 |\bD^n|^2dt + \frac{1}{2(\RE_2^{-1})_n}\int_0^T \|\bu_1\|^2dt \\
 &\leq  K_1\frac{c^2}{(\RE_2^{-1})_n} \int_0^T\|\bu_1\|^2 dt + \frac{1}{2(\RE_2^{-1})_n}\int_0^T \|\bu_1\|^2dt
\end{align*}
Rewriting, we obtain
\begin{align*}
 \int_0^T \|\bD^n\|^2 dt  &\leq K_1\frac{4c^2}{(\RE_2^{-1})^2_n} \int_0^T\|\bu_1\|^2 dt + \frac{2}{(\RE_2^{-1})^2_n}\int_0^T \|\bu_1\|^2dt \\
 &\leq K_1\frac{16c^2}{(\RE_1^{-1})^2} \int_0^T\|\bu_1\|^2 dt + \frac{8}{(\RE_1^{-1})^2}\int_0^T \|\bu_1\|^2dt
\end{align*}
Thus, $\bD^n$ is bounded above uniformly in $L^2(0,T;V)$ with respect to $n$.
Hence, by the Banach-Alaoglu Theorem, there exists a subsequence, relabeled as $(\bD^n)$, such that 
\begin{align}\label{BAconv_weak_u}
\bD^n \stackrel{*}{\rightharpoonup} \bD \text{ in } L^\infty(0,T;H)\quad \text{ and }\quad
\bD^n \rightharpoonup \bD \text{ in } L^2(0,T;V). 
\end{align}

Using \eqref{BAconv_weak_u}, note that all uniform bounds in $n$ on the terms in \eqref{NSEpreSens} in $L^2(0,T;V^*)$ are obtained in a similar manner to the proof of weak solutions for \eqref{NSEpre} except for the term $B(\bu_2^n,\bD^n)$.  However, by Lemma \ref{bilinear_unif_bd}, 
\begin{align*}
\|B(\bu_2^n,\bD^n)\|_{L^2(0,T;V^*)} &\leq k \|\bu_2^n\|_{L^\infty(0,T;H)}\|\bD^n\|_{L^\infty(0,T;H)}\|\bu_2^n\|_{L^2(0,T;V)}\|\bD^n\|_{L^2(0,T;V)},
\end{align*}
and due to the following bounds on $\bu_2^n$ (which can be found in \cite{Constantin_Foias_1988,Foias_Manley_Rosa_Temam_2001,Robinson_2001,Temam_2001_Th_Num}, etc.) and the fact that $(\RE_2^{-1})_n \in (\frac{\RE^{-1}}{2},\frac{3\RE^{-1}}{2})$,  
\begin{align*}
 \|\bu_2^n\|^2_{L^\infty(0,T;H)} &\leq |\bu^n_0|^2 + \frac{\|f\|_{L^\infty(0,T;H)}}{\lambda_1^2 (\RE_2^{-1})_n^2} \\
 &\leq |\bu_0|^2 + \frac{4\|f\|_{L^\infty(0,T;H)}}{\lambda_1^2 (\RE_1^{-1})^2}
\end{align*}
and
\begin{align*}
 \|\bu_2^n\|_{L^2(0,T;V)} &\leq \frac{1}{(\RE_2^{-1})_n}|\bu^n(0)|^2 + \frac{\|f\|_{L^\infty(0,T;H)}^2}{\lambda_1(\RE_2^{-1})_n^2} T \\
 &\leq \frac{2}{\RE_1^{-1}}|\bu_0|^2 + \frac{4\|f\|_{L^\infty(0,T;H)}^2}{\lambda_1(\RE_1^{-1})^2} T,
\end{align*}
 and thus $\|B(\bu_2^n,\bD^n)\|_{L^2(0,T;V^*)}$ is bounded above uniformly in $n$ independent of $(\RE_2^{-1})_n$.  Hence, independent of $(\RE_2^{-1})_n$, $d\bD^n/dt$ is bounded above uniformly in $n$ and by the Banach-Alaoglu Theorem a subsequence $\{\bD^n\}_{n\in\mathbb{N}}$ converges weakly to $d\bD/dt$ in $L^2(0,T;V^*)$.  Thus, by the Aubin Compactness Theorem, $\bD^n \to \bD$ strongly in $L^2(0,T;H)$.  Hence, weak continuity in $H$ follows due to the bounds on each of the terms above. Using these facts, we have weak-$*$ convergence in $L^2(0,T;V^*)$ of all but the bilinear terms in the standard sense.  Weak-$*$ convergence of the bilinear terms holds due to Lemma \ref{bilinear_wk_conv_v*},  yielding $B(\bD^n,\bu_1) \stackrel{*}{\rightharpoonup} B(\bD,\bu_1)$ in $L^2(0,T;V^*)$. Additionally since $\bu_2^n \to \bu_1$ strongly in $L^2(0,T;H)$, we can apply Lemma \ref{bilinear_wk_conv_v*} again to obtain that $B(\bu_2^n, \bD^n) \stackrel{*}{\rightharpoonup} B(\bu_1, \bD)$.  Thus, $\widetilde{\bu} := \bD$ satisfies
 \begin{align*}
  \widetilde{\bu}_t + B(\widetilde{\bu},\bu_1) + B(\bu_1, \widetilde{\bu}) + \RE_1^{-1} A\widetilde{\bu} + A\bu_1 = 0
 \end{align*}
in $L^2(0,T;V^*)$.
The initial condition is satisfied by construction.
 To prove uniqueness, suppose that there exist two weak solutions $\widetilde{\bu}_1$ and $\widetilde{\bu}_2$.  We consider the difference of the equations
\begin{align*}
 \frac{d}{dt} \widetilde{\bu}_1 + B(\widetilde{\bu}_1, \bu_1) + B(\bu_1,\widetilde{\bu}_1) + \RE_1^{-1} A\widetilde{\bu}_1 + A \bu_1  = 0
\end{align*}
and
\begin{align*}
 \frac{d}{dt} \widetilde{\bu}_2 + B(\widetilde{\bu}_2, \bu_1) + B(\bu_1, \widetilde{\bu}_2) + \RE_1^{-1} A\widetilde{\bu}_2 + A\bu_1 = 0,
\end{align*}
which, defining $\bU := \widetilde{\bu}_1 - \widetilde{\bu}_2$, yields
\begin{align*}
 \bU_t + B(\bU, \bu_1) + B(\bu_1, \bU) + \RE_1^{-1} A\bU = 0
\end{align*}
with $\bU(0) = 0$.  So, $\bU$ must be a weak solution to the above equation.  Taking the action on $\bU$ and applying the Lions-Magenes Lemma,

\begin{align*}
 \frac{1}{2}\frac{d}{dt} |\bU|^2 + \ip{B(\bU, \bu_1)}{\bU} + \RE_1^{-1}\|\bU\|^2 = 0 
\end{align*}
which implies
\begin{align*}
\frac{1}{2}\frac{d}{dt} |\bU|^2 +  \RE_1^{-1}\|\bU\|^2 &\leq c\|\bU\||\bU|\|\bu_1\| \\
&\leq \frac{c^2}{2\RE^{-1}} \|\bu_1\|^2|\bU|^2 + \frac{\RE_1^{-1}}{2} \|\bU\|^2.
\end{align*}
Dropping the second term, we obtain
\begin{align*}
 \frac{d}{dt} |\bU|^2 \leq \frac{c^2}{2\RE_1^{-1}} \|\bu_1\|^2|\bU|^2,
\end{align*}
and Gr{\"o}nwall's inequality implies that, for a.e. $0 \leq t \leq T$,
\begin{align*}
|\bU(t)|^2 \leq |\bU(0)|^2 \text{exp}\Big(\int_0^T \frac{c^2}{2\RE_1^{-1}} \|\bu_1\|^2 dt\Big). 
\end{align*}
Since we know the $\text{exp}\Big(\int_0^T \frac{c^2}{2\RE_1^{-1}} \|\bu_1\|^2 dt\Big)< \infty$ for all $T>0$ and $\bU(0) = 0$, we have that $\|\bU\|_{L^\infty(0,T; H)} = 0$, which implies that $\bU \equiv 0$.  Hence, weak solutions to \eqref{NSEpreSens} are unique.
\end{proof}

\begin{theorem}\label{ohhappyday1}
 Let $\{(\RE_2^{-1})_n\}_{n \in \mathbb{N}}$ be a sequence such that $(\RE_2^{-1})_n \to \RE_1^{-1}$ as $n \to \infty$.  Let
\begin{itemize}
 \item  $\bu$ be the solution to \eqref{NSEpre} with Reynolds number $\RE_1^{-1}$, forcing $f\in L^\infty(0,\infty;H)$, and initial data $\bu_0$;
 \item $\bu_2^n$ solve \eqref{NSEpre} with Reynolds number $(\RE_2^{-1})_n$, forcing $f \in L^\infty(0,\infty;H)$, and initial data $\bu_0 \in V$
 \item $\{\bD^n\}_{n \in \mathbb{N}}$ be a sequence of strong solutions to \eqref{NSEpreSens} with $\bD^n(0) = 0$.
 \end{itemize}
 Then there is a subsequence of $\{\bD^n\}_{n \in \mathbb{N}}$ that converges in $L^2(0,T;V)$ to a unique strong solution $\bD$ of \eqref{NSEpreSens1} with initial data $\bu_0\equiv 0$.
\end{theorem}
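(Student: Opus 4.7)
The proof plan is to argue at one order of regularity higher than Theorem~\ref{ohhappyday1weak}, replacing the roles of $H$ and $V$ there with $V$ and $\mathcal{D}(A)$ here. From the strong 2D Navier--Stokes theory, and because $(\RE_2^{-1})_n\in(\RE_1^{-1}/2,\,3\RE_1^{-1}/2)$ for large $n$, the family $\{\bu_2^n\}$ is uniformly bounded in $L^\infty(0,T;V)\cap L^2(0,T;\mathcal{D}(A))$ with $d\bu_2^n/dt$ uniformly bounded in $L^2(0,T;H)$. Combined with the $L^2(0,T;H)$-strong convergence $\bu_2^n\to\bu_1$ established in the proof of Theorem~\ref{ohhappyday1weak}, an Aubin--Lions argument on the triple $\mathcal{D}(A)\hookrightarrow\hookrightarrow V\hookrightarrow H$ upgrades this to $\bu_2^n\to\bu_1$ strongly in $L^2(0,T;V)$. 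The uniform $L^\infty(0,T;H)\cap L^2(0,T;V)$ bounds on $\{\bD^n\}$ obtained inside that proof will also be used freely.

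The core of the argument is a higher-order energy estimate. I would take the inner product of \eqref{NSEpreSens} with $A\bD^n$ and estimate each nonlinear term via the bilinear inequality \eqref{BIN2}, using Young's inequality to absorb the dissipation $(\RE_2^{-1})_n|A\bD^n|^2$. Since the Reynolds envelope prevents the viscous prefactors from degenerating, and since the strong-solution bounds on $\bu_1$ and $\bu_2^n$ make the resulting coefficient of $\|\bD^n\|^2$ integrable in time uniformly in $n$, this yields
\begin{align*}
 \tfrac{d}{dt}\|\bD^n\|^2 + \tfrac{(\RE_2^{-1})_n}{2}|A\bD^n|^2 \le \alpha(t)\,\|\bD^n\|^2 + \beta(t),
\end{align*}
with $\alpha,\beta\in L^1(0,T)$ uniformly in $n$. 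Because $\bD^n(0)=0$, Gr\"onwall's inequality yields uniform bounds on $\{\bD^n\}$ in $L^\infty(0,T;V)\cap L^2(0,T;\mathcal{D}(A))$; reading the equation then produces a uniform bound on $d\bD^n/dt$ in $L^2(0,T;H)$ via Lemma~\ref{bilinear_unif_bd}.

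Extracting subsequences via Banach--Alaoglu, I obtain $\bD^n\rightharpoonup\bD$ in $L^2(0,T;\mathcal{D}(A))$, $\bD^n\stackrel{*}{\rightharpoonup}\bD$ in $L^\infty(0,T;V)$, and $d\bD^n/dt\rightharpoonup d\bD/dt$ in $L^2(0,T;H)$; Aubin--Lions then upgrades this to the claimed strong convergence $\bD^n\to\bD$ in $L^2(0,T;V)$. The bilinear terms $B(\bu_2^n,\bD^n)$ and $B(\bD^n,\bu_1)$ pass to the limit weakly in $L^2(0,T;H)$ by Lemma~\ref{bilinear_wk_conv} (whose hypotheses hold by the strong $L^2(0,T;V)$ convergences of $\bu_2^n$ and $\bD^n$ together with the requisite uniform bounds), the linear terms converge trivially, and $\bD(0)=0$ is preserved because $\{\bD^n\}$ is uniformly in $C([0,T];V)$ with $\bD^n(0)=0$. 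Uniqueness of strong solutions follows from the standard energy argument on the difference $\widetilde{\bu}_1-\widetilde{\bu}_2$ of two candidates---essentially the $\mu=0$ case of the proof of Lemma~\ref{D_prime_unique}---using \eqref{BIN} and Gr\"onwall. The principal obstacle I anticipate is producing the higher-order Gr\"onwall inequality with $n$-uniform coefficients: the term $\ip{B(\bu_2^n,\bD^n)}{A\bD^n}$ in particular must be split via \eqref{BIN2} so that its prefactor depends only on norms of $\bu_2^n$ that are bounded independently of $(\RE_2^{-1})_n$, which is exactly what the Reynolds envelope together with the strong NSE estimates delivers.
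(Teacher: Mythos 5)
Your proposal is correct and follows essentially the same route as the paper: a higher-order energy estimate obtained by pairing \eqref{NSEpreSens} with $A\bD^n$, Gr\"onwall with $n$-uniform coefficients thanks to the Reynolds envelope, Banach--Alaoglu plus Aubin--Lions for strong $L^2(0,T;V)$ convergence, passage to the limit in the bilinear terms via Lemma~\ref{bilinear_wk_conv}, and uniqueness inherited from the weak case. The only cosmetic difference is that the paper estimates $\ip{B(\bu_2^n,\bD^n)}{A\bD^n}$ via \eqref{BINsimple} together with Agmon's inequality rather than \eqref{BIN2}, which yields an equally $n$-uniform integrable coefficient.
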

\begin{proof}

Let $T>0$ be given, and let $N>0$ be large enough that $n>N$ implies $\{(\RE_2^{-1})_n\} \subset (\frac{\RE_1^{-1}}{2}, \frac{3\RE_1^{-1}}{2})$.  Then by the argument in Theorem \ref{ohhappyday1weak}, we can obtain a subsequence which we relabel $\{\bu_2^n\}$ such that $\bu_2^n \to \bu_1$ in $L^2(0,T;V)$.

Consider $\bD^n$ to be the strong solution to \eqref{NSEpreSens} with Reynolds number $(\RE_2^{-1})_n$.  Taking a justified inner product of \eqref{NSEpreSens} with $A\bD^n$,

\begin{align*}
\frac{1}{2} \frac{d}{dt} \|\bD^n\|^2 + (\RE_2^{-1})_n |A\bD^n|^2 &= -(B(\bD^n,\bu_1),A\bD^n) - (B(\bu_2^n, \bD^n), A\bD^n) \\
&\phantom{=}- (A\bu_1,A \bD^n).
\end{align*}
Applying Young's inequality, we obtain
\begin{align*}
\frac{1}{2} \frac{d}{dt} \|\bD^n\|^2 + \frac{(\RE_2^{-1})_n}{2} |A\bD^n|^2 &\leq -(B(\bD^n,\bu_1),A\bD^n) - (B(\bu_2^n, \bD^n), A\bD^n) \\
&\phantom{=} + \frac{1}{2(\RE_2^{-1})_n}|A\bu_1|^2.
\end{align*}
Applying \eqref{BINsimple} to the second bilinear term,
\begin{align*}
\frac{1}{2} \frac{d}{dt} \|\bD^n\|^2 + \frac{(\RE_2^{-1})_n}{2} |A\bD^n|^2 &\leq -(B(\bD^n,\bu_1),A\bD^n) + \|\bu_2^n\|_{L^\infty(\Omega)}\|\bD^n\||A\bD^n| \\
&\phantom{=} + \frac{1}{2(\RE_2^{-1})_n}|A\bu_1|^2 \\
&\leq \frac{2k^2}{(\RE_2^{-1})_n} |\bu_2^n||A\bu_2^n|\|\bD^n\|^2 + \frac{(\RE_2^{-1})_n}{8} |A\bD^n|^2 \\
&\phantom{=} -(B(\bD^n,\bu_1),A\bD^n)  + \frac{1}{2(\RE_2^{-1})_n}|A\bu_1|^2
\end{align*}
and applying \eqref{BIN2} to the first bilinear term,
\begin{align*}
&
\frac{1}{2} \frac{d}{dt} \|\bD^n\|^2 + \frac{3(\RE_2^{-1})_n}{8} |A\bD^n|^2 
\\\leq&
\frac{2k^2}{(\RE_2^{-1})_n}|\bu_2^n||A\bu_2^n|\|\bD^n\|^2 \\
&\phantom{=} + c|\bD^n|^{1/2}\|\bD^n\|^{1/2}\|\bu_1\|^{1/2}|A\bu_1|^{1/2}|A\bD^n| + \frac{1}{2(\RE_2^{-1})_n}|A\bu_1|^2 \\
\leq& \frac{2k^2}{(\RE_2^{-1})_n}|\bu_2^n||A\bu_2^n|\|\bD^n\|^2  + \frac{2c^2}{\lambda_1(\RE_2^{-1})_n} \|\bD^n\|^2\|\bu_1\||A\bu_1|\\
&\phantom{=}+ \frac{(\RE_2^{-1})_n}{8}|A\bD^n|^2  + \frac{1}{2(\RE_2^{-1})_n}|A\bu_1|^2
\end{align*}
which can be rewritten as 
\begin{align*}
&
 \frac{d}{dt} \|\bD^n\|^2 + \frac{(\RE_2^{-1})_n}{2} |A\bD^n|^2 
 \\\leq& 
 \Big(\frac{4k^2}{(\RE_2^{-1})_n}|\bu_2^n||A\bu_2^n| +  \frac{4c^2}{\lambda_1(\RE_2^{-1})_n}\|\bu_1\||A\bu_1| \Big) \|\bD^n\|^2  + \frac{1}{(\RE_2^{-1})_n}|A\bu_1|^2.
\end{align*}
Integrating on both sides in time, with $0 \leq t \leq T$,
\begin{align*}
\|\bD^n(t)\|^2 + &\frac{(\RE_2^{-1})_n}{2} \int_0^t |A\bD^n|^2 ds \leq  \frac{1}{(\RE_2^{-1})_n}\int_0^t|A\bu_1(s)|^2 ds \\
 &\phantom{=} + \int_0^t\Big(\frac{4k^2}{(\RE_2^{-1})_n} |\bu_2^n(s)||A\bu_2^n(s)| +  \frac{4c^2}{\lambda_1(\RE_2^{-1})_n}\|\bu_1(s)\||A\bu_1(s)| \Big) \|\bD^n(s)\|^2 ds 
\end{align*}
Dropping the second term on the left hand side, we apply Gr{\"o}nwall's inequality to obtain
\begin{align*}
\|\bD^n(t)\|^2 &\leq \alpha_n(t) \;\text{exp} \Big(\int_0^t\frac{4k^2}{(\RE_2^{-1})_n} |\bu_2^n(s)||A\bu_2^n(s)| +  \frac{4c^2}{\lambda_1(\RE_2^{-1})_n}\|\bu_1(s)\||A\bu_1(s)| ds\Big) \\
&\leq \alpha(t)\;\text{exp} \Big(\int_0^t\frac{8k^2}{\RE_1^{-1}} |\bu_2^n(s)||A\bu_2^n(s)| +  \frac{8c^2}{\lambda_1\RE_1^{-1}}\|\bu_1(s)\||A\bu_1(s)| ds\Big).
\end{align*}
where $\alpha_n(t) := \frac{2}{(\RE_2^{-1})_n}\int_0^t|A\bu_1(s)|^2 ds \leq \alpha(t) := \frac{4}{\RE_1^{-1}}\int_0^t|A\bu_1(s)|^2 ds$.  Since 
\[ \int_0^T |A\bu_2^n|^2 ds \leq \|\bu_0\|^2 + \frac{\|f\|_{L^2(0,T; H)}}{(\RE_2^{-1})_n}\]
as proven in, e.g., \cite{Constantin_Foias_1988, Robinson_2001, Foias_Manley_Rosa_Temam_2001, Temam_2001_Th_Num}, then
\begin{align*}
 \sup\limits_{t\in [0,T]} \|\bD^n(t)\|^2 &\leq \alpha(T) \frac{8k^2}{\lambda_1^2(\RE_1^{-1})} \int_0^T |A\bu_2^n|^2 ds + \frac{8c^2}{\lambda_1(\RE_1^{-1})}\|\bu_1(s)\||A\bu_1(s)| ds\\
 &\leq \alpha(T) \frac{8k^2}{\lambda_1^2(\RE_1^{-1})}\Big[ \|\bu_0\|^2 + \frac{\|f\|_{L^2(0,T; H)}}{(\RE_2^{-1})_n}\Big] \\
 &\phantom{=} + \alpha(T) \int_0^T \frac{8c^2}{\lambda_1(\RE_1^{-1})}\|\bu_1(s)\||A\bu_1(s)| ds\\
 &\leq \alpha(T) \frac{8k^2}{\lambda_1^2(\RE_1^{-1})}\Big[ \|\bu_0\|^2 + \frac{2\|f\|_{L^2(0,T; H)}}{(\RE_1^{-1})}\Big] \\
 &\phantom{=} + \alpha(T) \int_0^T \frac{8c^2}{\lambda_1(\RE_1^{-1})}\|\bu_1(s)\||A\bu_1(s)| ds\\
\end{align*}

This implies that $\bD^n \in L^\infty(0,T; V)$ and $\{\bD^n\}$ is uniformly bounded in this space.

Additionally, considering again the inequality
\begin{align*}
\|\bD^n(t)\|^2 + &\frac{(\RE_2^{-1})_n}{2} \int_0^t |A\bD^n|^2 ds \leq \frac{1}{(\RE_2^{-1})_n}\int_0^t|A\bu_1(s)|^2 ds \\
 &\phantom{=} + \int_0^t\Big(\frac{4k^2}{(\RE_2^{-1})_n} |\bu_2^n(s)||A\bu_2^n(s)| +  \frac{4c^2}{\lambda_1(\RE_2^{-1})_n}\|\bu_1(s)\||A\bu(s)| \Big) \|\bD^n(s)\|^2 ds.
\end{align*}
we set $t = T$, drop the first term on the left hand side, and bound the Reynolds number above to obtain
\begin{align*}
\int_0^T |A\bD^n|^2 ds &\leq \frac{8}{(\RE_1^{-1})^2}\left(\int_0^T|A\bu_1(s)|^2 ds \right)\\
 &\phantom{=} + \int_0^T\Big(\frac{32k^2}{\lambda_1(\RE_1^{-1})^2} |A\bu_2^n(s)|^2 +  \frac{32c^2}{\lambda_1(\RE_1^{-1})^2}\|\bu_1(s)\||A\bu(s)| \Big) \|\bD^n(s)\|^2 ds
\end{align*}
By the fact that $\{\|\bu_2^n\|_{L^2(0,T;\mathcal{D}(A))}\}$ is bounded above in $n$ as demonstrated in Theorem \ref{ohhappyday1weak} and the result that $\{\|\bD^n\|_{L^\infty(0,T;V)}\}$ is bounded above uniformly in $n$, we also have that $\{\|\bD^n\|_{L^2(0,T; \mathcal{D}(A))}\}$ is bounded above uniformly in $n$.  Since $\{\bD^n\}$ is bounded above uniformly in $n$ in both $L^\infty(0,T;V)$ and $L^2(0,T;\mathcal{D}(A))$, then we can conclude that there exists a subsequence, which we relabel as $\{\bD^n\}$, such that 
\begin{equation}\label{BAconv_str_u}
\bD^n \stackrel{*}{\rightharpoonup} \bD \text{ in } L^\infty(0,T; V) \text{ and } \bD^n \rightharpoonup \bD \text{ in } L^2(0,T;\mathcal{D}(A)). 
\end{equation}
Using \eqref{BAconv_str_u}, note that all uniform bounds in $n$ on the terms in \eqref{NSEpreSens} in $L^2(0,T;H)$ are obtained in a similar manner to the proof of strong solutions for the \eqref{NSEpre} and are independent of $(\RE_2^{-1})_n$ except for the bilinear terms.  The bilinear terms are bounded uniformly in $L^2(0,T;H)$ with respect to $n$, due to Lemma \ref{bilinear_unif_bd}.  Hence, $\frac{d\bD^n}{dt}$ is bounded above uniformly in $n$ in $L^2(0,T;H)$.  Thus, as in, e.g., \cite{Robinson_2001, Constantin_Foias_1988, Foias_Manley_Rosa_Temam_2001, Temam_2001_Th_Num}, 
\[\frac{d\bD^n}{dt} \rightharpoonup \frac{d\bD}{dt} \text{ in } L^2(0,T;H).\]
Hence, by the Aubin Compactness Theorem, $\bD^n \to \bD$ strongly in $L^2(0,T;V)$.  As in, e.g., \cite{Robinson_2001, Temam_2001_Th_Num, Foias_Manley_Rosa_Temam_2001, Constantin_Foias_1988}, $\bD \in C^0(0,T;V)$. Using these facts, we have weak convergence in $L^2(0,T;H)$ for all except the bilinear terms in the standard sense.  Weak convergence of the bilinear terms holds due to Lemma \ref{bilinear_wk_conv}.  Hence, $\widetilde{\bu} := \bD$ satisfies
 \begin{align*}
  \widetilde{\bu}_t + B(\widetilde{\bu},\bu_1) + B(\bu_1, \widetilde{\bu}) + \RE_1^{-1} A\widetilde{\bu} + A\bu_1 = 0
 \end{align*}
in $L^2(0,T;H)$.

The initial condition is also satisfied by construction.
Uniqueness holds due to the results in Theorem \ref{ohhappyday1weak}.
\end{proof}

\section{Extension to a Data Assimilation Algorithm}\label{secDA}
In this section, we extend our analysis above to the context of a data assimilation algorithm, as discussed in the introduction.
\begin{theorem}\label{ohhappyday2}
Let $\{(\RE_2^{-1})_n\}_{n \in \mathbb{N}}$ be a sequence such that $(\RE_2^{-1})_n \to \RE_1^{-1}$ as $n \to \infty$.  Choose $\mu$ and $h$ such that $4\mu c_0 h^2 \leq (\RE_2^{-1})_n \leq \frac{3\RE_1^{-1}}{2}$.  Let
\begin{itemize}
 \item  $\bv$ be the solution to \eqref{NSEmodified} with Reynolds number $\RE_1^{-1}$, forcing $f\in L^\infty(0,\infty;H)$, and initial data $\bv_0$;
 \item $\bv_2^n$ solve \eqref{NSEmodified} with Reynolds number $(\RE_2^{-1})_n$, forcing $f\in L^\infty(0,\infty;H)$, and initial data $\bv_0 \in V$;
 \item $\{\bD^{n'}\}_{n \in \mathbb{N}}$ be a sequence of strong solutions to \eqref{NSEmodifiedSens} with $\bD^{n'}(0) = 0$.
 \end{itemize}
 Then there is a subsequence of $\{\bD^{n'}\}_{n \in \mathbb{N}}$ that converges in $L^2(0,T;V)$ to a unique solution $\bD'$ of \eqref{NSEmodifiedSens1} with initial data $\bu_0\equiv 0$.
\end{theorem}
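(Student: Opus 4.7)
The plan is to mirror the structure of the proof of Theorem~\ref{ohhappyday1}, adapting the energy estimates to accommodate the feedback term $\mu P_\sigma I_h(\bD - \bD^{n'})$ that appears in \eqref{NSEmodifiedSens}. As a preliminary step, I would fix $T>0$, choose $N$ large enough that $n>N$ implies $(\RE_2^{-1})_n \in (\RE_1^{-1}/2, 3\RE_1^{-1}/2)$, and apply the standard AOT strong-solution estimates from \cite{Azouani_Olson_Titi_2014} (together with the smallness assumption $\mu c_0 h^2 \leq (\RE_2^{-1})_n$) to obtain uniform bounds on $\bv_2^n$ in $L^\infty(0,T;V)\cap L^2(0,T;\mathcal D(A))$ and on $d\bv_2^n/dt$ in $L^2(0,T;H)$; these are independent of $n$ precisely because $(\RE_2^{-1})_n$ is pinched between two positive constants. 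Aubin--Lions and the uniqueness for \eqref{NSEmodified} then extract a subsequence with $\bv_2^n \to \bv$ strongly in $L^2(0,T;V)$. Simultaneously, Theorems~\ref{ohhappyday1weak}--\ref{ohhappyday1} furnish the corresponding convergence $\bD^n \to \widetilde{\bu}$ strongly in $L^2(0,T;V)$ for the NSE difference quotients that enter the right-hand side of \eqref{NSEmodifiedSens}.

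The core of the argument is an $H^1$ energy estimate for $\bD^{n'}$: I would test \eqref{NSEmodifiedSens} against $A\bD^{n'}$, treat the bilinear terms $(B(\bD^{n'},\bv_1),A\bD^{n'})$ and $(B(\bv_2^n,\bD^{n'}),A\bD^{n'})$ exactly as in Theorem~\ref{ohhappyday1} using \eqref{BIN2} and \eqref{BINsimple} with Agmon's inequality, bound $(A\bv_1,A\bD^{n'})$ by Young's inequality, and handle the new interpolant term via
\begin{align*}
\mu\bigl|(P_\sigma I_h(\bD - \bD^{n'}),A\bD^{n'})\bigr|
&\leq \mu\bigl(|\bD - \bD^{n'}| + \sqrt{c_0}\,h\,\|\bD - \bD^{n'}\|\bigr)|A\bD^{n'}|,
\end{align*}
then apply Young's inequality. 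The condition $4\mu c_0 h^2 \leq (\RE_2^{-1})_n$ is exactly what is needed to absorb the $\|\bD^{n'}\|\cdot|A\bD^{n'}|$ contribution into the viscous dissipation $(\RE_2^{-1})_n|A\bD^{n'}|^2$, after which Gr\"onwall in $\|\bD^{n'}\|^2$ (using the already-established $L^\infty(0,T;V)$ bound on $\bD$ and the $L^2(0,T;\mathcal D(A))$ bounds on $\bu_1,\bv_1,\bv_2^n$) yields uniform bounds on $\bD^{n'}$ in $L^\infty(0,T;V)\cap L^2(0,T;\mathcal D(A))$. Lemma~\ref{bilinear_unif_bd} then gives a uniform $L^2(0,T;H)$ bound on each bilinear term, and the linear bound \eqref{IntIN} controls $\mu P_\sigma I_h(\bD-\bD^{n'})$ in $L^2(0,T;H)$, so $d\bD^{n'}/dt$ is bounded uniformly in $L^2(0,T;H)$.

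With these bounds in hand, the Banach--Alaoglu theorem produces a subsequence such that $\bD^{n'} \overset{*}{\rightharpoonup} \bD'$ in $L^\infty(0,T;V)$, $\bD^{n'} \rightharpoonup \bD'$ in $L^2(0,T;\mathcal D(A))$, and $d\bD^{n'}/dt \rightharpoonup d\bD'/dt$ in $L^2(0,T;H)$; Aubin--Lions upgrades this to strong convergence in $L^2(0,T;V)$. Passage to the limit in the linear terms $(\RE_2^{-1})_n A\bD^{n'}$, $A\bv_1$, and $d\bD^{n'}/dt$ is standard; the bilinear terms $B(\bD^{n'},\bv_1)$ and $B(\bv_2^n,\bD^{n'})$ converge weakly in $L^2(0,T;H)$ to $B(\bD',\bv_1)$ and $B(\bv_1,\bD')$ by Lemma~\ref{bilinear_wk_conv} (note $\bv_2^n \to \bv_1$ strongly in $L^2(0,T;V)$); and the feedback term passes to $\mu P_\sigma I_h(\widetilde{\bu} - \bD')$ by the boundedness and linearity of $I_h$ combined with the strong convergence $\bD^n \to \widetilde{\bu}$ and $\bD^{n'} \to \bD'$ in $L^2(0,T;H)$. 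The initial condition $\bD'(0)=0$ is inherited by construction. Setting $\widetilde{\bv} := \bD'$, the limit satisfies \eqref{NSEmodifiedSens1}, and uniqueness is immediate from Lemma~\ref{D_prime_unique}.

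The principal obstacle is the interpolant feedback term: unlike the pure Navier--Stokes case it cannot be dismissed by orthogonality or by the Poincar\'e trick alone, and the $H^1$-level estimate naively loses a power of $h$. The resolution is the structural smallness hypothesis $4\mu c_0 h^2 \leq (\RE_2^{-1})_n$, which is tight enough to close the $V$ energy estimate while still being compatible with the uniform AOT bounds; verifying that this single condition suffices at both the $L^\infty(0,T;V)$ and $L^2(0,T;\mathcal D(A))$ levels is the delicate point of the proof.
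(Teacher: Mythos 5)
Your proposal is correct and follows essentially the same route as the paper: the same uniform AOT bounds on $\bv_2^n$, the same $A\bD^{n'}$ energy estimate with the bilinear terms handled as in Theorem~\ref{ohhappyday1}, the same Banach--Alaoglu/Aubin compactness step, the same limit passage via Lemma~\ref{bilinear_wk_conv} and the linearity of $I_h$, and uniqueness by the Gr\"onwall argument of Lemma~\ref{D_prime_unique}. The one place you diverge is the interpolant term, and your description of your own mechanism there is slightly off: the crude bound $\mu\abs{(I_h(\bD^n-\bD^{n'}),A\bD^{n'})}\le \mu\bigl(\abs{\bD^n-\bD^{n'}}+\sqrt{c_0}\,h\,\norm{\bD^n-\bD^{n'}}\bigr)\abs{A\bD^{n'}}$ followed by Young does \emph{not} absorb the $\norm{\bD^{n'}}\cdot\abs{A\bD^{n'}}$ piece into the dissipation under $4\mu c_0h^2\le(\RE_2^{-1})_n$; it leaves a residual $+\,C\mu\norm{\bD^{n'}}^2$ that must be carried by Gr\"onwall, yielding a bound with an extra factor of order $e^{C\mu T}$. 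The paper instead splits off $-\mu(I_h(\bD^{n'}),A\bD^{n'})=-\mu\norm{\bD^{n'}}^2+\mu(\bD^{n'}-I_h(\bD^{n'}),A\bD^{n'})$, and it is the sign-definite $-\mu\norm{\bD^{n'}}^2$ term together with \eqref{IntIN} and the hypothesis $4\mu c_0h^2\le(\RE_2^{-1})_n$ that genuinely cancels the interpolation error, leaving only $\frac{(\RE_2^{-1})_n}{16}\abs{A\bD^{n'}}^2$. Your version still closes the estimate for fixed $T$ and $\mu$, so there is no gap, but the sharper decomposition is what makes the stated smallness condition play the role you attribute to it.
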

\begin{proof}

 Let $T > 0$. Note that since $\{(\RE_2^{-1})_n\} \subset (\frac{\RE_1^{-1}}{2}, \frac{3\RE_1^{-1}}{2})$  for $n > N$ for some sufficiently large $N$, we can follow the proof of strong solutions for \eqref{NSEmodified} in \cite{Azouani_Olson_Titi_2014} to obtain bounds on $\{\bv_2^n\}_{n>N}$ in the appropriate spaces that are independent of $(\RE_2^{-1})_n$.  First, we note that \cite{Azouani_Olson_Titi_2014} quickly proves $|f+\mu P_\sigma I_h(\bu_2^n)|^2 \leq M_n$ since $|P_\sigma I_h(\bu_2^n)|^2 \leq |\bu_2^n|^2$.  However, since $\bu_2^n$ is bounded above uniformly in $n$ (see the proof of Theorem \ref{ohhappyday1weak}), we have that $|f+\mu P_\sigma I_h(\bu_2^n)|^2 \leq m$ for some $m$ independent of $n$.  Thus, we have the following bounds from \cite{Azouani_Olson_Titi_2014} bounded above uniformly in $n$:
\begin{align}\label{vbound_2_inf}
\|\bv_2^n\|_{L^\infty(0,T;H)}^2 &\leq |\bv^n_2(0)|^2 + \frac{m}{\mu (\RE_2^{-1})_n \lambda_1} \\
   &\leq |\bv_0|^2 + \frac{2m}{\mu \RE_1^{-1} \lambda_1} \notag,
\end{align}
\begin{align}\label{vbound_2_2}
\|\bv_2^n\|_{L^2(0,T;V)}^2 &\leq \frac{1}{(\RE_2^{-1})_n}|\bv^n_2(0)|^2 + \frac{T}{\mu(\RE_2^{-1})_n} m \\
 &\leq \frac{2}{\RE_1^{-1}}|\bv_0|^2 + \frac{2T}{\mu\RE_1^{-1}} m, \notag
\end{align}
\begin{align}\label{vbound_h1_inf}
 \|\bv_2^n\|_{L^\infty(0,T;V)}^2 &\leq \frac{1}{\psi(T)} \Big[ \|\bv^n_2(0)\|^2 + \frac{4T}{(\RE_2^{-1})_n}m\Big] \\
 &\leq \frac{1}{\overline{\psi(T)}} \Big[ \|\bv_0\|^2 + \frac{8T}{\RE_1^{-1}}m\Big]\notag
\end{align}
where 
\begin{align*}
\frac{1}{\psi(T)} &= \text{exp}\Big\{\frac{c}{(\RE_2^{-1})^3_n}\int_0^T|\bv_2^n|^2\|\bv_2^n\|^2 ds\Big\} \\
&\leq \frac{1}{\overline{\psi(T)}} = \text{exp}\Big\{\frac{8c}{(\RE_1^{-1})^3}\int_0^T|\bv_2^n|^2\|\bv_2^n\|^2 ds\Big\},
\end{align*}
which is bounded above uniformly in $n$ due to \eqref{vbound_2_inf} and \eqref{vbound_2_2}, and
\begin{align*}
&
\|\bv_2^n\|_{L^2(0,T;\mathcal{D}(A))}^2
\\\leq&
\frac{1}{(\RE_2^{-1})_n}\|\bv^n_2(0)\|^2 + \frac{c}{(\RE_2^{-1})^3_n} \int_0^T (|\bv_2^n|^2\|\bv_2^n\|^4 + \frac{4}{(\RE_2^{-1})_n}|f + P_\sigma I_h(\bu_2^n)|^2) ds 
\\\leq&
\frac{2}{(\RE_1)^{-1}}\|\bv_0\|^2 + \frac{8c}{(\RE_1^{-1})^3} \int_0^T |\bv_2^n|^2\|\bv_2^n\|^4 ds + \frac{8T}{\RE_1^{-1}}m,
\end{align*}
which is bounded above uniformly in $n$ due to \eqref{vbound_2_inf}, \eqref{vbound_2_2}, \eqref{vbound_h1_inf}.  Hence, we will obtain a subsequence that is relabeled $\bv_2^n \to \bv$ in $L^2(0,T;V)$ for some function $\bv$.  Indeed, we see that by identical arguments presented in Theorem \ref{ohhappyday1weak}, $\bv = \bv_1$.  Also due to Poincar{\'e}'s inequality, we obtain that $\bv_2^n \to \bv_1$ in $L^2(0,T;H)$.

Let $\{\bD^{n'}\}_{n \in \mathbb{N}}$ be a sequence of solutions to \eqref{NSEmodifiedSens}.  We consider the Leray projection of \eqref{NSEmodifiedSens}:
\begin{align*}
 \frac{d}{dt}\bD^{n'} + B(\bD^{n'},\bv_1) + B(\bv_2^n, \bD^{n'}) + (\RE_2^{-1})_nA \bD^{n'} + A \bv_1 = \mu P_\sigma I_h(\bD^n-\bD^{n'}).
\end{align*}
The existence proof for \eqref{NSEmodifiedSens1} closely follows the proof of Theorem \ref{ohhappyday1}, with some modifications on the bounds of $\bD^{n'}$ which we show below.  
Taking the inner product with $A\bD^{n'}$ and proceeding as in the proof of Theorem \ref{ohhappyday1}, we obtain
\begin{align}\label{dnprime_inequality}
 \frac{1}{2}\frac{d}{dt} \|\bD^{n'}\|^2 + \frac{(\RE_2^{-1})_n}{4} |A\bD^{n'}|^2 &\leq \Big(\frac{2k^2}{(\RE_2^{-1})_n}|\bv_2^n||A\bv_2^n| +  \frac{2c^2}{\lambda_1(\RE_2^{-1})_n}\|\bv_1\||A\bv_1| \Big) \|\bD^{n'}\|^2 
 \\\notag
 &\phantom{=} + \frac{1}{2(\RE_2^{-1})_n}|A\bv_1|^2 + \mu (I_h(\bD^n - \bD^{n'}),A\bD^{n'}).
\end{align}
We slightly modify the inequalities obtained in \cite{Azouani_Olson_Titi_2014} for the interpolant term,
\begin{align*}
\mu|(I_h(\bD^{n'}), A\bD^{n'})| &\leq \frac{4\mu^2}{(\RE_2^{-1})_n}|\bD^{n'}-I_h(\bD^{n'})|^2 + \frac{(\RE_2^{-1})_n}{16}|A\bD^{n'}|^2 - \mu\|\bD^{n'}\|^2 \\
&\leq \frac{4\mu^2c_0h^2}{(\RE_2^{-1})_n}\|\bD^{n'}\|^2 + \frac{(\RE_2^{-1})_n}{16}|A\bD^{n'}|^2 - \mu\|\bD^{n'}\|^2 \\
&\leq \frac{(\RE_2^{-1})_n}{16} |A\bD^{n'}|^2.
\end{align*}
Also,
\begin{align*}
\mu|(I_h(\bD^n), A\bD^{n'})| \leq \frac{4\mu^2}{(\RE_2^{-1})_n} |\bD^n|^2 + \frac{(\RE_2^{-1})_n}{16}|A\bD^{n'}|^2.
\end{align*}
Using these inequalities in \eqref{dnprime_inequality}:
\begin{align*}
 \frac{1}{2}\frac{d}{dt} \|\bD^{n'}\|^2 + \frac{(\RE_2^{-1})_n}{8} |A\bD^{n'}|^2 &\leq \Big(\frac{2k^2}{(\RE_2^{-1})_n}|\bv_2^n||A\bv_2^n| +  \frac{2c^2}{\lambda_1(\RE_2^{-1})_n}\|\bv_1\||A\bv_1| \Big) \|\bD^{n'}\|^2 \\
 &\phantom{=} + \frac{1}{2(\RE_2^{-1})_n}|A\bv_1|^2 + \frac{4}{(\RE_2^{-1})_n}|\bD^n|^2  \\
 &\leq \Big(\frac{2k^2}{(\RE_2^{-1})_n}|\bv_2^n||A\bv_2^n| +  \frac{2c^2}{\lambda_1(\RE_2^{-1})_n}\|\bv_1\||A\bv_1| \Big) \|\bD^{n'}\|^2 \\
 &\phantom{=} + \frac{1}{2(\RE_2^{-1})_n}|A\bv_1|^2 + \frac{4}{\lambda_1^2(\RE_2^{-1})_n}|A\bD^n|^2.
\end{align*}
Following identical arguments as in Theorem \ref{ohhappyday1} with \[\alpha_n(t) := \frac{1}{2(\RE_2^{-1})_n}|A\bv_1|^2 + \frac{4}{\lambda_1^2(\RE_2^{-1})_n}|A\bD^n|^2 \leq \alpha(t) := \frac{1}{\RE_1^{-1}}|A\bv_1|^2 + \frac{8}{\lambda_1^2\RE_1^{-1}}|A\bD^n|^2,\] along with the fact that $P_\sigma I_h(\bD^n-\bD^{n'})$ is bounded uniformly in $n$ in $L^2(0,T;H)$, we obtain a subsequence relabeled $\bD^{n'} \to \bD'$ in $L^2(0,T;V)$. Indeed, let $\phi \in L^2(0,T;H)$; then
\begin{align*}
&\quad
\int_0^T (P_\sigma I_h(\bD^n-\bD^{n'}) - P_\sigma I_h(\bD-\bD'),\phi) ds 
\\&\leq 
\int_0^T |I_h(\bD^n-\bD^{n'}) - I_h(\bD-\bD')||\phi| ds \\
&\leq \int_0^T |I_h(\bD^n-\bD) - I_h(\bD^{n'}-\bD')||\phi| ds \\
&\leq \int_0^T |[(\bD^n-\bD) - (\bD^{n'}-\bD')] - I_h((\bD^n-\bD)-(\bD^{n'}-\bD'))||\phi| ds \\
&\phantom{=}+ \int_0^T |(\bD^n-\bD) - (\bD^{n'}-\bD')||\phi| ds \\
&\leq \sqrt{c_0}h \int_0^T \|(\bD^n-\bD) - (\bD^{n'}-\bD')\||\phi| ds \\
&\phantom{=}+ \frac{1}{\lambda_1^{1/2}} \int_0^T \|(\bD^n-\bD) - (\bD^{n'}-\bD')\||\phi| ds \\
&\leq (\sqrt{c_0}h \|\bD^n-\bD\|_{L^2(0,T;V)}\|\phi\|_{L^2(0,T;H)} \\
&\phantom{=} + \frac{1}{\lambda_1^{1/2}} \|\bD^{n'}-\bD'\|_{L^2(0,T;V)}\|\phi\|_{L^2(0,T;H)}).
\end{align*}

Additionally, since we now have that $\bD^{n'} \to \bD$ in $L^2(0,T;V)$, then $P_\sigma I_h(\bD^n-\bD^{n'}) \rightharpoonup P_\sigma I_h(\bD-\bD')$ in $L^2(0,T;H)$ and we conclude $\bD'$ is a strong solution in the sense of Definition \ref{defweaksolnv}. 

To show that the solutions are unique, we consider the 
difference of the equations
\begin{align*}
 \frac{d}{dt} \widetilde{\bv}_1 + B(\widetilde{\bv}_1, \bv_1) + B(\bv_1, \widetilde{\bv}_1) + \RE^{-1} A\widetilde{\bv}_1 + A\bv = \mu P_\sigma I_h(\widetilde{\bu}-\widetilde{\bv}_1)
\end{align*}
and
\begin{align*}
 \frac{d}{dt} \widetilde{\bv}_2 + B(\widetilde{\bv}_2, \bv_1) + B(\bv_1, \widetilde{\bv}_2) + \RE^{-1} A\widetilde{\bv}_2 + A\bv_1 = \mu P_\sigma I_h(\widetilde{\bu}-\widetilde{\bv}_2)
\end{align*}
which, defining $\bV := \widetilde{\bv}_1-\widetilde{\bv}_2$, yields
\begin{align*}
 \bV_t + B(\bV,\bv_1) + B(\bv_1, \bV) + \RE^{-1} A\bV = -\mu P_\sigma I_h(\bV)
\end{align*}
with $\bV(0) = 0$.  So, $\bV$ must be a solution to the above equation. Taking the action on $\bV$ and applying the Lions-Magenes Lemma, 
\begin{align*}
\frac{1}{2} \frac{d}{dt} |\bV|^2 + \ip{B(\bV,\bv_1)}{\bV} + \RE^{-1}\|\bV\|^2 = \ip{-\mu P_\sigma I_h(\bV)}{\bV}
\end{align*}
which implies that
\begin{align*}
 \frac{1}{2} \frac{d}{dt} |\bV|^2 + \RE^{-1}\|\bV\|^2 &\leq c\|\bv_1\||\bV|\|\bV\| + \mu(\sqrt{c_0}h + \lambda_1^{-1})\|\bV\||\bV| \\
 &\leq \frac{\mu^2(\sqrt{c_0}h + \lambda_1^{-1})^2}{\RE^{-1}} |\bV|^2 + \frac{\RE^{-1}}{4} \|\bV\|^2 \\
 &\phantom{=} + \frac{c^2}{2\RE^{-1}}\|\bv_1\|^2|\bV|^2 + \frac{\RE^{-1}}{2}\|\bV\|^2.
\end{align*}
Thus, 
\begin{align*}
 \frac{d}{dt} |\bV|^2 &\leq \Big( \frac{\mu^2(\sqrt{c_0}h + \lambda_1^{-1})^2}{\RE^{-1}} + \frac{c^2}{2\RE^{-1}}\|\bv_1\|^2\Big)|\bV|^2
\end{align*}
and Gr{\"o}nwall's inequality implies, for a.e. $0 \leq t \leq T$,
\begin{align*}
 |\bV(t)|^2 \leq |\bV(0)|^2 \text{exp}\Big(\int_0^T  \frac{\mu^2(\sqrt{c_0}h + \lambda_1^{-1})^2}{\RE^{-1}} + \frac{c^2}{2\RE^{-1}}\|\bv_1\|^2 dt \Big).
\end{align*}
But $\bV(0) = 0$, and thus $\|\bV\|_{L^\infty(0,T;H)} = 0$ implies that $\bV \equiv 0$.  Hence, solutions to \eqref{NSEmodifiedSens} are unique.
\end{proof}


\section{Conclusion}\label{secConclusion}

In this article, we proved well-posedness of the sensitivity equations for the 2D incompressible Navier-Stokes equations and the associated AOT data assimilation system.  Specifically, we proved the existence and uniqueness of global solutions to these equations.  A byproduct of the proof is that the sensitivity of solutions to the equations involved in the algorithm are bounded in appropriate spaces.  Hence, changing the Reynolds number, or equivalently the viscosity, mid-simulation as in \cite{Carlson_Hudson_Larios_2018} does not result in major aberrations in the solution.  We note that in the present context, our proof is somewhat non-standard, in that we proved the existence by showing that the difference quotients converge (or at least, have a subsequence that converges) to a solution of the equations.  We believe this is the first such rigorous proof that the sensitivity equations for the 2D Navier-Stokes equations are globally well-posed, although formal proofs have been given in other works, cited above.

\section*{Acknowledgements}
 \noindent
 E.C. would like to give thanks for the kind hospitality of the COSIM group at Los Alamos National Laboratory where some of this work was completed.  
 The research of E.C. was supported in part by the NSF GRFP grant no. 1610400. The research of A.L. was supported in part by the NSF grants no. DMS-1716801 and CMMI-1953346. 
 



\end{document}